\DeclareMathOperator{\Abn}{Abn}
\renewcommand{\Im}{\operatorname{Im}}
\renewcommand{\phi}{\varphi}
\DeclareMathOperator{\length}{length}
\newcommand{\step}[1]{\par\medskip\noindent\it#1\rm}
\newcommand{\Scalar}[2]{\left\langle#1,#2\right\rangle}
\newcommand{\scalar}[2]{\langle#1,#2\rangle}
\newcommand{\norm}[1]{\left\Vert#1\right\Vert}
\newcommand{\abs}[1]{\lvert#1\rvert}
\renewcommand{\gg}{\mathfrak{g}}
\newcommand{\g}{\gamma}
\newcommand{\wh}{\widehat}
\renewcommand{\H}{\mathbb{H}}
\newcommand{\E}{\mathbb{E}}
\newcommand{\e}{\varepsilon}
\renewcommand{\r}{\rho}
\newcommand{\s}{\sigma}
\newcommand{\la}{\lambda}
\newcommand{\wt}{\widetilde}
\newcommand{\ol}{\overline}
\renewcommand{\d}{\delta}
\newcommand{\p}{\partial}
\DeclareMathOperator{\diag}{diag}
\newtheoremstyle{pippo}  
  {}       
  {}       
   {\sffamily}   
 {}        
  {\bfseries}  
  {.}   
  {1ex}       
  {}           
\newtheoremstyle{pluto}  {}{}
{\slshape}  {}{\bfseries}  {.} {1ex}    {}
\newtheorem{theorem}{Theorem}[section]
\newtheorem{proposition}[theorem]{Proposition}
\newtheorem{lemma}[theorem]{Lemma}
\theoremstyle{pluto}
\newtheorem{remark}[theorem]{Remark}
\newcommand{\R}{\mathbb{R}}
\newcommand{\G}{\mathbb{G}}
\newcommand{\F}{\mathbb{F}}
\newcommand{\N}{\mathbb{N}}
\newcommand{\C}{\mathbb{C}}
\renewcommand{\d}{\delta}
\renewcommand{\t}{\tau}
\renewcommand{\a}{\alpha}
\renewcommand{\b}{\beta}
\DeclareMathOperator{\Span}{span}
\numberwithin{equation}{section}
\let\oldbibliography\thebibliography
\renewcommand{\thebibliography}[1]{%
  \oldbibliography{#1}%
  \setlength{\itemsep}{0pt}%
}
\titleformat{\section}{%
\normalfont\large\bfseries}{\thesection.}{1em}{}
\titleformat{\subsection}{%
\normalfont\normalsize\bfseries}{\thesubsection.}{1em}{}
\begin{document}

\title{On the  lack of semiconcavity of the subRiemannian distance in a class of Carnot groups
 \thanks{2010 Mathematics Subject Classification. Primary 53C17;
Secondary  49J15.
Key words and Phrases.    Carnot groups, SubRiemannian distance, Abnormal curve,    semiconcavity.}}
\author{Annamaria Montanari \and Daniele Morbidelli}

\date{}

\maketitle


\begin{abstract}
  We show by explicit estimates that the SubRiemannian distance in a Carnot group of step two is locally 
semiconcave away from the diagonal if and only if   the group does not contain
abnormal minimizing curves. 
Moreover, we prove that local semiconcavity fails to hold in the step-3 Engel group, even in the weaker ``horizontal'' sense. 
\end{abstract}

%
%

\section{Introduction}

It is well known that subRiemannian spheres are rather irregular objects.
Already in the simplest example---the
 Heisenberg group---the subRiemannian distance from the origin is only Lipschitz-continuous at points of the center of the group.
Furthermore, it can be shown that the only subRiemannian manifolds where (small) spheres are smooth are the Riemannian ones (see \cite{AgrachevBarilariBoscain}). 


The irregularity of the distance function is mainly governed by the presence of \emph{abnormal geodesics} (see Section~\ref{generalissimo}). Indeed, the function $d(x_0,\cdot)$ can not be smooth at any  point  $x$ connected to $x_0$ by an abnormal 
length-minimizer (see \cite{AgrachevBarilariBoscain}).
Furthermore, it has been shown in 
several papers by  Agrachev,  Bonnard,   Chyba and   Kupka \cite{AgrachevBonnard97}, Tr\'elat \cite{Trelat} and Agrachev \cite{Agrachev15} that,   
under the \emph{corank 1} assumption, where in particular all abnormal extremals are \emph{strictly abnormal}, 
at a point $x$ along an abnormal length-minimizing curve $\gamma$ 
leaving from $x_0$,
the subRiemannian sphere centered at $x_0$ is tangent to $\gamma$ in  a suitable sense and ultimately the distance  from $x_0$ can not  be expected to be even Lipschitz at~$x$. 

On the other side, it is known that  abnormal minimizers  do not appear at all 
for a subclass of two-step Carnot groups (M\'etivier groups) and, by a result of Chitour, Jean and Tr\'elat \cite{ChitourJeanTrelat}, in  the very large class furnished by \emph{generic} subRiemannian structures of rank at least three.

In the papers \cite{CannarsaRifford,FigalliRifford10}, Cannarsa and Rifford, and Figalli and Rifford showed that in a bracket generating subRiemannian manifold where  
  all  length-minimizing paths are strictly normal,  
   the subRiemannian distance from a fixed base point $x_0\in M$ is locally semiconcave in $M\setminus\{x_0\}$.  Since local semiconcavity implies  local Lipschitz-continuity, 
  this result can not be extended to the situation where  corank 1 abnormal minimizers appear.

However, there are subRiemannian manifolds and more specifically Carnot groups which 
do not belong to the class in \cite{CannarsaRifford,FigalliRifford10}, because they 
contain abnormal minimizing paths, but  
 do not enjoy the corank 1 assumption of 
 \cite{Trelat} and \cite{Agrachev15}, because  abnormal minimizing paths   are normal too (we say that they are \emph{normal-abnormal}). 
This class includes  
all non M\'etivier two-step Carnot groups and some step-three  Carnot groups.

In this paper we show some negative results on the local semiconcavity of subRiemannian distances in the setting  of non M\'etivier two-step groups  
and in the step-three Engel group. 
We also discuss a weaker property, namely the \emph{horizontal semiconcavity}
 and we show that, in all two-step free groups, such property holds ``pointwise"  at all  abnormal points, where the usual Euclidean notion fails to hold. We plan to come back to a detailed study of local 
 horizontal semiconcavity for the distance in two-step Carnot groups in a subsequent   work.
 On the other side, it turns out that in  the three-step Engel group the horizontal semiconcavity fails to hold.

Besides its relevant role in the optimal transport problems studied in \cite{FigalliRifford10}, local semiconcavity of the subRiemannian distance 
plays  a   role in the construction of suitable ``barrier functions'' in potential theory which are a fundamental tool in the study of second order nondivergence subelliptic PDEs with measurable coefficients (see \cite{GutierrezTournier11}, \cite{Tralli12}, 
\cite{Montanari14}).

  To state our result, we also introduce briefly some notation for two-step Carnot groups. 
Let  $(x,t)$ be coordinates in $\R^m\times\R^\ell$. Fix a family $A^1,\dots, A^\ell\in\R^{m\times m}$ of skew-symmetric matrices
and define the composition law  
\begin{equation}\label{opusdei} 
(x,t)\cdot(\xi,\tau)=\Bigl(x+\xi,t+\tau+\frac{1}{2}\scalar{x}{A\xi} \Bigr)                                                         
\end{equation} 
where  $\scalar{x}{A\xi}:=(\scalar{x}{A^1 \xi}),\dots, \scalar{ x}{A^\ell\xi})\in\R^\ell$ and $\scalar{\cdot}{\cdot}$ denotes the inner product in~$\R^m$.
 We always   assume the H\"ormander condition
$\Span\{(A_{jk}^1,\dots, A_{jk}^\ell) :1\leq j<k\leq m\}=\R^\ell$ and we denote by $d$ be the subRiemannian distance defined by the family of   left-invariant 
 vector fields  
$
 X_j=\p_{x_j}+\frac 12 \sum_{k=1}^m\sum_{\a=1}^\ell
 {A_{kj}^\a}x_k \p_{t_\a},
$ for $j=1,\dots, m$.
See Section~\ref{generalissimo}. 

Here is our statement on two-step Carnot groups, where we   always denote by $d$ the subRiemannian distance from the origin.
\begin{theorem}\label{teoremalibero} 
Let $(\G,\cdot)=(\R^n,\cdot)=(\R^m_x\times\R^\ell_t,\cdot) $
 be the  two-step Carnot group equipped with the  law \eqref{opusdei}.  
Then, at any   $(x,0)=\gamma(1)$, final point of an abnormal minimizer $\gamma$ leaving from the origin,  there are $C>0$ and  $\tau \in\R^\ell$ such that  we have 
\begin{equation}\label{thefirst}
 d(x,\beta \tau)-d(x,0)\geq C\abs{\beta} \quad\text{ for all $\beta\in[-1,1]$.}                                                                               \end{equation} 
Moreover, if
 $(\G,\cdot)=(\R^n,\cdot)$  is    free,    then for 
any   
$(x,t)=\gamma(1) $, final point of an abnormal minimizer $\gamma$ leaving from the origin,
 there  are   $C>0$ and  $(0,\tau)\in \G$   such that 
\begin{equation}\label{liberato} 
  d(x,t+\beta \tau)- d(x,t)\geq C\abs{\beta}\quad \text{for all $\beta\in[-1,1]$.}                                                                                  \end{equation} 
\end{theorem}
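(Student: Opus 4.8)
My plan is a direct lower bound for $d$ along the vertical line through the abnormal endpoint. Write points of $\G$ as $(x,t)\in\R^m\times\R^\ell$, set $A_\theta:=\sum_\a\theta_\a A^\a$ (so $A_\theta\ne 0$ whenever $\theta\ne 0$, by the H\"ormander hypothesis), and recall from Section~\ref{generalissimo} that the $\R^m$-component of an abnormal curve in a two-step group stays in a fixed subspace $W=\ker A_\theta$ with $\theta\ne 0$, which I normalise by $\abs\theta=1$; I will take $\tau:=\theta$ in both parts. For \eqref{thefirst} one checks first that $d(x,0)=\abs x$ for every $x$ — realised by the segment $s\mapsto(sx,0)$, with matching lower bound because the linear function $(\xi,\eta)\mapsto\scalar{x}{\xi}/\abs x$ is $1$-Lipschitz for $d$ (its horizontal gradient is the unit vector $x/\abs x$) — so the abnormal minimiser to $(x,0)$ is that segment and $x\in W$, with $t=0$. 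For \eqref{liberato} one identifies $\R^\ell\simeq\Lambda^2\R^m$ so that $A^\a$ becomes the wedge product; then $\theta\in\Lambda^2W^\perp$, and since the $\R^m$-component of $\gamma$ runs inside $W$ its endpoint satisfies $x\in W$ and $t\in\Lambda^2W$. Finally the triangle inequality and left invariance give $d(x,t+\beta\tau)\le d(x,t)+d(0,\beta\tau)\le d(x,t)+C\abs\beta^{1/2}$ by homogeneity, so only competitors of length $\le d(x,t)+C$ are relevant below.

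The linear — rather than merely $\tfrac12$-H\"older — growth will come from an isoperimetric remark: if $c\colon[0,1]\to\R^N$ is a closed curve of length $L_0$, then $c$ lies in a ball of radius $L_0/4$, whence $\bigl|\tfrac12\int_0^1 c\w\dot c\bigr|\le\tfrac18 L_0^2\le\tfrac18\int_0^1\abs{\dot c}^2$ by Cauchy--Schwarz on $[0,1]$ (and likewise with $A_\theta$ replacing the wedge, at the cost of a factor $\norm{A_\theta}$). So a loop enclosing symplectic area $a$ has $\int_0^1\abs{\dot c}^2\gtrsim\abs a$, \emph{linearly} in $a$; this is exactly what defeats the elementary estimate $d(\cdot,\cdot)\lesssim(\text{vertical displacement})^{1/2}$.

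Now fix $\beta\in[-1,1]$ and a constant-speed horizontal $\sigma$ from $0$ to $(x,t+\beta\tau)$ with $L:=\length(\sigma)\le d(x,t)+C$; let $e\colon[0,1]\to\R^m$ be its $\R^m$-component and split $e=e_W+e_{W^\perp}$ orthogonally. As $e(1)=x\in W$, the curve $e_{W^\perp}$ is a loop. The $\Lambda^2\R^m$-valued area $\tfrac12\int_0^1 e\w\dot e=t+\beta\theta$ splits along $\Lambda^2W\oplus(W\w W^\perp)\oplus\Lambda^2W^\perp$, and since $t\in\Lambda^2W$, $\theta\in\Lambda^2W^\perp$, its $\Lambda^2W^\perp$-part is $\tfrac12\int e_{W^\perp}\w\dot e_{W^\perp}=\beta\theta$ and its $\Lambda^2W$-part is $\tfrac12\int e_W\w\dot e_W=t$ (in the setting of \eqref{thefirst} one only needs the elementary identity $\beta=\tfrac12\int\scalar{e}{A_\theta\dot e}=\tfrac12\int\scalar{e_{W^\perp}}{A_\theta\dot e_{W^\perp}}$, together with $e_W(1)=x\in W$). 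By the isoperimetric remark $\int_0^1\abs{\dot e_{W^\perp}}^2\gtrsim\abs\beta$; meanwhile the canonical lift of $e_W$ to the Carnot subgroup $\G_W$ generated by $W$ ends at $(x,t)$, so $\length(e_W)\ge d_{\G_W}(x,t)\ge d(x,t)$ (for \eqref{thefirst} simply $\length(e_W)\ge\abs{e_W(1)}=\abs x=d(x,0)$). Using $\sqrt{p^2+q^2}\ge p+\tfrac{q^2}{2\sqrt{p^2+q^2}}$ for $p\ge0$ and the constant speed $\le L$,
\[
L=\int_0^1\sqrt{\abs{\dot e_W}^2+\abs{\dot e_{W^\perp}}^2}\ \ge\ \int_0^1\abs{\dot e_W}+\frac{1}{2L}\int_0^1\abs{\dot e_{W^\perp}}^2\ \ge\ d(x,t)+\frac{c\,\abs\beta}{L}\ \ge\ d(x,t)+C\abs\beta,
\]
and taking the infimum over $\sigma$ gives \eqref{liberato}, and a fortiori \eqref{thefirst}.

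The step I expect to be the real difficulty is the ``clean splitting'' used above — that the parallel part $e_W$ of an \emph{arbitrary} competitor must still cost at least $d(x,t)$, with no economy obtained by letting the $W^\perp$-loop feed back, through the mixed term $W\w W^\perp$, into the central coordinates along $W$. When $t=0$ this is automatic, which is precisely why \eqref{thefirst} is confined to such endpoints; in the free case the identification $\R^\ell\simeq\Lambda^2\R^m$ forces the central coordinates to split orthogonally, so that the $\Lambda^2W$-area is produced by $e_W$ alone. For a general two-step group the mixed term genuinely couples the loop to the main motion — in fact this coupling is what makes the vertical behaviour of $d$ subtle, and controlling it (if at all possible) is the substantial obstacle one meets beyond the free and the segment cases.
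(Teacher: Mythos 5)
Your proof is correct and, modulo cosmetics, is the same argument as the paper's: decompose the competitor's $\R^m$-component orthogonally along $W$ and $W^\perp$, observe that the $W^\perp$-component is a loop which must enclose vertical area $\beta$ and hence costs $\int_0^1\abs{\dot e_{W^\perp}}\gtrsim\abs\beta^{1/2}$ (which you supply via a self-contained circumradius--Cauchy--Schwarz lemma where the paper cites the standard $\tfrac12$-H\"older lower bound for the distance to the centre in a step-two subgroup), while the $W$-component already reaches $(x,t)$ in the subgroup $\G_W$ and costs at least $d(x,t)$, and then combine via Cauchy--Schwarz/constant speed. Your slightly larger choice $W=\ker A_\theta$ (versus the paper's span of the spectral data of the control) and the algebraic variant in the closing inequality are harmless, and your final paragraph correctly diagnoses the coupling through $W\wedge W^\perp$ as the obstruction that confines the non-free case to endpoints with $t=0$.
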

 Remark that in two-step Carnot groups 
 abnormal minimizers are always normal  (see \cite[Section~20.5]{Agrachev} 
or \cite[Theorem~2.22]{Rifford14}). 
Both estimates of this theorem ensure that the distance is not semiconcave (see  the definition in   \eqref{lasemi}).

 It is known that 
for step-two Carnot groups,  $x\mapsto d(0,x) $ is Lipschitz for $x$ belonging to  compact sets  which do not intersect the origin.
Then, 
failure of semiconcavity can be visualized as a presence of an outward Lipschitz cusp on a suitable ``vertical section"  of the sphere. Inner Lipschitz cusps do not conflict with semiconcavity (think of the Heisenberg group).

Our second result concerns the  three-step Engel group $\E=\R^4$. 
In this setting  any abnormal  minimizer leaving from the origin  is 
contained in a line (\cite{Sussmann96,LiuSussmann}). The group law can be written in the form
\begin{equation}\label{enghel2} 
 x\cdot  \xi =\Bigl(x_1+\xi_1, x_2+\xi_2, x_3+\xi_3+ x_1\xi_2,
 x_4+\xi_4 +\frac{x_1^2}{2}\xi_2+x_1\xi_3\Bigr)
\end{equation}
 (see \cite[p.~285]{BonfiglioliLanconelliUguzzoni})   and the abnormal line containing the origin is  
$  \{(0,x_2,0,0)\in\R^4:x_2\in\R\} $. 
We consider the control distance associated with the left-invariant vector fields
\begin{equation*}
 X_1=\p_1 
\quad\text{and}	\quad X_2 =\p_2+x_1\p_3+\frac{x_1^2}{2} \p_4.
\end{equation*}
It follows from the results
of \cite{AgrachevBonnard97} that the distance   from the origin 
$d=d(0,\cdot)$  is not locally semiconcave at any point of such line. 
Here we  prove a further result, showing that the distance is not even semiconcave in horizontal directions in any open set intersecting the abnormal line.
  Here is our result.  
\begin{theorem}
For all $x_2 \in\R$ there is $C>0$ such that, if $\abs{x_4}$ is small, then
\begin{equation}\label{assetto} 
d(0,x_2,0,x_4)-d(0,x_2,0,0)\geq C\abs{x_4}.
\end{equation} 
Furthermore, we have the horizontal estimate 
\begin{equation}\label{numerodos}
  \limsup_{(y_1,y_2)\to 0}\frac{d\bigl(e^{y_1 X_1+y_2 X_2}(0,x_2,0,0)\bigr)
 -d\bigl((0,x_2,0,0)\bigr)}{y_1^2+y_2^2}=+\infty.
 \end{equation}
\end{theorem}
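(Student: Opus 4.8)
The plan is to argue directly with length‑parametrized minimizing curves, written in the coordinates of \eqref{enghel2}. If $\gamma=(a,b,c,e)\colon[0,L]\to\E$ is a horizontal curve issuing from the origin with controls $(u_1,u_2)$, so $\dot\gamma=u_1X_1+u_2X_2$, then $\dot a=u_1$, $\dot b=u_2$, $\dot c=u_2a$, $\dot e=\tfrac12u_2a^2$, and the subRiemannian length of $\gamma$ is $\int_0^L\sqrt{u_1^2+u_2^2}\,ds$. Since $\abs{\dot b}=\abs{u_2}\le\sqrt{u_1^2+u_2^2}$, the $b$–coordinate of a horizontal curve never grows faster than its length; combined with the fact that $s\mapsto(0,s,0,0)$ is horizontal of unit speed (its velocity is $X_2$, because $x_1\equiv0$ along it), this shows $d(0,s,0,0)=\abs s$ for every $s$, in particular $d(0,x_2,0,0)=\abs{x_2}$. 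Throughout, a minimizer will be parametrized by arc length, so that $u_1^2+u_2^2\equiv1$ and $L$ equals the distance from the origin to the endpoint.

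For \eqref{assetto}, I would first reduce to $x_2\ge0$ by the isometric automorphism $(x_1,x_2,x_3,x_4)\mapsto(x_1,-x_2,-x_3,-x_4)$, which only flips the sign of $x_4$. Take an arc‑length minimizer $\gamma$ from $0$ to $(0,x_2,0,x_4)$ and put $L=d(0,x_2,0,x_4)$. From $u_2\le1$ one gets $x_2=\int_0^Lu_2\le L$, so $L-x_2=\int_0^L(1-u_2)\ge0$ with nonnegative integrand. Because $u_1^2=1-u_2^2=(1-u_2)(1+u_2)\le2(1-u_2)$, this yields $\int_0^Lu_1^2\le2(L-x_2)$, whence Cauchy–Schwarz gives $\norm a_\infty^2\le L\int_0^Lu_1^2\le2L(L-x_2)$ (recall $a(0)=0$). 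Substituting into the fourth equation,
\[
\abs{x_4}=\Bigl|\int_0^L\tfrac12u_2a^2\,ds\Bigr|\le\tfrac12\norm a_\infty^2\int_0^L\abs{u_2}\,ds\le L^2(L-x_2),
\]
i.e.\ $L-x_2\ge\abs{x_4}/L^2$; since $d$ is continuous and $d(0,x_2,0,0)=x_2$, we have $L\le x_2+1$ once $\abs{x_4}$ is small, giving \eqref{assetto} with $C=(x_2+1)^{-2}$.

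For \eqref{numerodos}, I would compute the exponential first: integrating the control system above with constant controls $(y_1,y_2)$ from the origin produces, at time $1$, the point $\bigl(y_1,y_2,\tfrac12y_1y_2,\tfrac16y_1^2y_2\bigr)$, and multiplying on the left by $(0,x_2,0,0)$ via \eqref{enghel2} (using $x_1=0$) gives $e^{y_1X_1+y_2X_2}(0,x_2,0,0)=\bigl(y_1,\,x_2+y_2,\,\tfrac12y_1y_2,\,\tfrac16y_1^2y_2\bigr)$. Restricting to the slice $y_1=0$, this point is $(0,x_2+y_2,0,0)$, which lies on the abnormal line, so by the first paragraph its distance from the origin is $\abs{x_2+y_2}$. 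Choosing the sign of $y_2\to0$ so that $\abs{x_2+y_2}=\abs{x_2}+\abs{y_2}$, the quotient in \eqref{numerodos} equals $\abs{y_2}/y_2^2\to+\infty$; hence the $\limsup$ over all $(y_1,y_2)\to0$ is $+\infty$.

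The only step that is not purely mechanical is the string of inequalities proving \eqref{assetto}: the key point is that $\dot e$ is \emph{quadratic} in $a$, so the Cauchy–Schwarz bound $\norm a_\infty\lesssim(L-x_2)^{1/2}$ propagates to $\abs{x_4}\lesssim\norm a_\infty^2\lesssim L-x_2$, producing the linear estimate $L-x_2\gtrsim\abs{x_4}$ rather than the merely Hölder bound one would obtain from $\abs{x_4}\lesssim\norm a_\infty$. Everything else is immediate, \eqref{numerodos} following at once from the fact that the abnormal line is a distance‑realizing horizontal line.
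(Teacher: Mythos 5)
Your proof of \eqref{assetto} is correct and takes a genuinely different route from the paper's, though it exploits the same key fact. Both arguments start from the quadratic dependence $\dot\gamma_4=\tfrac12 u_2\gamma_1^2$, which gives $\abs{x_4}\leq\tfrac12\norm{\gamma_1}_\infty^2\cdot\length(\gamma)$. From there the paper normalizes (by dilation and the reflection isometry) to $x_2=1$, deduces $\norm{\gamma_1}_\infty\geq\abs{x_4}^{1/2}$ once the length is bounded, and closes with a triangle-inequality chord estimate on the plane projection, getting $\length\geq 2\sqrt{1/4+\abs{x_4}}$. You instead invoke the pointwise inequality $u_1^2=(1-u_2)(1+u_2)\leq 2(1-u_2)$ for unit-speed controls, integrate to obtain $\int u_1^2\leq 2(L-x_2)$, and feed Cauchy--Schwarz into $\norm{\gamma_1}_\infty^2\leq L\int u_1^2\leq 2L(L-x_2)$; substituting this back gives $L-x_2\geq\abs{x_4}/L^2$ directly. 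Your closing is slightly slicker, avoids the normalization to $x_2=1$, and works uniformly in $x_2\geq0$, but the mechanism is the same.

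For \eqref{numerodos} your computation is valid for the inequality \emph{as literally printed}, but it bypasses its entire substance. Taking $y_1=0$ moves \emph{along} the abnormal line, where $d(0,s,0,0)=\abs{s}$; a one-sided quotient $(f(x+y)-f(x))/y^{2}\to+\infty$ then holds trivially at any point of any subRiemannian manifold where $f$ has a nonzero first-order variation, and it says nothing about semiconcavity: for $x_2\neq 0$ the \emph{symmetric} quotient along your direction is identically $0$. The paper instead takes $y_2=0$ and proves the nontrivial fact
\[
\lim_{y_1\to 0}\frac{d\bigl((y_1,x_2,0,0)\bigr)-d\bigl((0,x_2,0,0)\bigr)}{y_1^2}=+\infty,
\]
i.e.\ superquadratic growth in the horizontal direction $X_1$ \emph{transverse} to the abnormal line, via a careful analysis of the holonomy constraint $\int_\gamma x_1^2\,dx_2=0$ (the rotated regions $R_\la$, $M_\la$, $G_\la$ and the subsequent length estimates). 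Combined with the reflection isometry $x_1\mapsto -x_1$, this yields a $+\infty$ limit for the symmetric second difference as well --- the actual failure of horizontal semiconcavity, which is what the theorem is after and what the subsequent remark records as a Martinet estimate. So while your two-line derivation of \eqref{numerodos} is formally a proof, it misses the key idea entirely and you should redo that part in the transverse direction.
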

The first inequality also 
follows from the  estimate for Martinet vector fields proved in \cite{AgrachevBonnard97} (see Remark  \ref{martino1} below),
but our   proof  is more elementary. 
To the best of our knowledge,
 estimate \eqref{numerodos}   is new.

Our arguments to estimate distances are not based on exact calculations with geodesics, which in some cases  are rather
difficult  (see e.g. \cite{ArdentovSachkov11,ArdentovSachkov14}). 
We use  properties  of minimizers to localize abnormal points and we estimate the distance from the origin of close points by elementary direct arguments.   

The paper is structured as follows.
Section \ref{generalissimo} contains some general preliminaries. 
In Section \ref{passettoduetto} we discuss the step-two case and in Section 
\ref{cuba} we discuss the Engel model.

\section{General preliminaries}\label{generalissimo} 
\subsection{Control distances, endpoint maps and extremals}  
Let us start by recalling the vocabulary we will use in the following sections.
For a complete discussion of the subject we refer to the monographs \cite{Agrachev,AgrachevBarilariBoscain,Rifford14}.

Given   a family $X_1,\dots, X_m$ of linearly independent smooth
vector fields in $\R^n,$ the  subRiemannian distance associated with the family is defined as follows. 
An   absolutely continuous      path $\gamma\in W^{1,2}((0,1),\R^n)$ 
is said to be \emph{horizontal} if 
there is a \emph{control} $u\in L^2((0,1),\R^m)$ such that
we can write $\dot\gamma(t)=\sum_{j=1}^mu_j(t) X_j(\gamma(t)) $ for a.e.~$t\in(0,1)$. 
The subRiemannian length of a horizontal path $\gamma$ is  $\length(\gamma):=\int_0^1\abs{u(t)}dt $.
Given $x,y\in\R^n$, the subRiemannian distance between $x$ and $y$ is 
$d(x,y)=\inf\bigl\{ \int_0^1\abs{u(t)}dt\}$, where the infimum is taken among all horizontal curves $\gamma$ such that $\gamma(0)=x$ and $\gamma(1)=y$. If the H\"ormander condition holds (i.e., the vector fields, together with their commutators of sufficiently large order span a space of dimension $n$ at any point $x\in\R^n$) 
then for any pair of points $x,y\in\R^n$ there is a horizontal path connecting $x,y$ and therefore $d(x,y)$ is finite. Furthermore, it turns out that for close points, the infimum is a minimum.

Given a fixed point $x_0\in\R^n$, and given $u\in L^2((0,1),\R^m)$, we consider the a.e. solution $\gamma_u$ of the nonautonomous  	Cauchy problem
\begin{equation}\label{nonautonomo} 
\dot\gamma = \sum_j u_j(t)X_j(\gamma)\quad\text{with $\gamma_u(0)=x_0$}. \end{equation}  If $\gamma_u
\in W^{1,2}((0,1),\R^n)$ is globally defined on $[0,1]$, we define the endpoint map
$
 E(u):=\gamma_u(1).
$
In Carnot groups, it turns out that the map
$E:L^2\to \R^n$ is globally defined
and  smooth.   We say that $\gamma$ has constant speed if $\abs{u(s)}_{\R^m}=C$ for a.e.  $s\in[0,1]$.

Let $x_0\in\R^n$ be a fixed point and let $ x \in\R^n$. Assume that there is a constant-speed path $\gamma:[0,1]\to\R^n$  which is a length minimizer between $x_0$ and $x$, i.e.~$\length(\gamma)=d(x_0,x)$.  
This implies that 
there is a nonzero
vector $(\xi_0,\xi)\in\R\times \R^n$ such that
\begin{equation}\label{estremale} 
\xi_0\scalar{u}{v}_{L^2}  +\bigl\langle\xi, dE(u) v\bigr\rangle_{\R^n} =0\quad\forall \;v\in L^2=L^2((0,1),
\R^m),
\end{equation} 
where the linear map $dE(u):L^2((0,1),\R^m)\to \R^n$ denotes the differential of $E$. 
If \eqref{estremale} holds, we say that $u$ is an \emph{extremal control}, or that the corresponding curve $\gamma_u$ given by \eqref{nonautonomo} is an \emph{extremal curve}.
Clearly,  it suffices to consider the case  $\xi_0=1$ and  $\xi_0=0$.
If \eqref{estremale} holds for some $(\xi_0,\xi)$ with  $\xi_0=1$, then we say that $u$ is a \emph{normal extremal control}, and $\gamma_u$ is a \emph{normal extremal curve}. If instead \eqref{estremale} holds for some $(\xi_0,\xi)$ with  $\xi_0=0$, then we say
that $u$ (resp.~$\gamma_u $) is an  \emph{abnormal} extremal control (resp.~curve). 
Equivalentely, abnormal controls are those controls $u\in L^2$ such that $dE(u):L^2\to\R^n$ is not 
open;  they  are sometimes called \emph{singular controls} and the corresponding curves are called \emph{singular curves}. 
The choice of $(\xi_0,\xi)$ is not unique, and it may happen that a control is both normal and abnormal. In such case we say that $u$ is normal-abnormal.
If $\gamma=\gamma_u$ is an abnormal curve, the set of $(\la_0,\la)\in\R\times \R^n$  such that  \eqref{estremale}
holds is a subspace whose dimension is called the \emph{corank} of $\gamma$ (see \cite{Trelat,Agrachev15}). Corank 1 extremals can not be normal-abnormal.
 Finally, a normal control/curve which is not abnormal is called \emph{strictly normal} and an abnormal control/curve which is not normal is called \emph{strictly abnormal}.
 
 It is known that all abnormal length minimizing curves in two-step Carnot groups cannot be strictly abnormal  (see \cite{Agrachev}).

\subsection{Two-step groups and M\'etivier condition}
 \label{cagatina} 
Let $\mathfrak{g}=V_1\oplus V_2$ be  a two-step nilpotent stratified Lie algebra (i.e.~$[V_1,V_1]=V_2$ and $[\gg,V_2]=0$). Let  $\scalar{\cdot}{\cdot}_{V_1}$ be an inner product on $V_1$. Fix an orthonormal basis $X_1,\dots, X_m$ of $V_1$ and any basis $T_1,\dots, T_\ell$ 
of $V_2$. Then we have the commutation relations
$[X_j,X_k]=\sum_{\a=1}^\ell A_{jk}^\a T_\a$ for suitable constants $A_{jk}^\a=-A_{kj}^\alpha\in\R$. Since $\operatorname{Exp}:\gg\to\G$ is a global diffeomorphism, we  can identify the Lie group $\G=\operatorname{Exp}(\gg)$ with $\R^m\times\R^\ell$ via exponential coordinates of the first kind
\begin{equation}
 \label{espoprimo}
\R^m\times\R^\ell\ni (x_1,\dots, x_m,t_1,\dots, t_\ell)\simeq \operatorname{Exp}\Bigl(\sum_j x_jX_j+\sum_\a t_\a T_\a\Bigr)\in\G=\exp(\gg)
\end{equation} 
Finally, an application of the Baker--Campbell--Hausdorff--Dynkin formula (see \cite{BonfiglioliLanconelliUguzzoni}) shows that the group law in $\G$  in the coordinates $(x,t)\in\R^m\times\R^\ell$ takes the form 
\begin{equation}\label{opusdei2}  
(x,t)\cdot(y,s)=\Bigl(x+y,t+s+\frac{1}{2}\scalar{x}{Ay} \Bigr)                                                         \end{equation} 
mentioned in \eqref{opusdei}. 
 A subRiemannian frame of orthonormal horizontal left-invariant 
 vector fields in given by  
$
 X_j=\p_{x_j}+\frac 12 \sum_{k=1}^m\sum_{\a=1}^\ell
 {A_{kj}^\a}x_k \p_{t_\a},
$ for $j=1,\dots, m$.
Moreover, $[X_j,X_k]=A_{jk}=\sum_\a A_{jk}^\a\p_\a$. We assume the H\"ormander condition $\Span\{A_{jk}:1\leq j<k\leq m\}=\R^\ell$.

In a two-step group, given $\eta\in V_2^*$, define $J_\eta:V_1\to V_1$ by the formula
 $\scalar{J_\eta X}{X'}=\eta([X,X'])$. 
 We say that the group satisfies the M\'etivier condition  
\cite{Metivier80} if the linear map $J_\eta$ is an isomorphism   for all $\eta\in V_2^*\setminus\{  0\}$.  
 The  M\'etivier class  includes the class of the groups of Heisenberg type  (with strict inclusion, see \cite[Section~7]{MullerSeeger04} or  \cite{BonfiglioliLanconelliUguzzoni}).
 An equivalent way to state the M\'etivier condition is by requiring that the map 
 $
  \R^m \ni y\mapsto \scalar{Aw}{y}\in\R^\ell$  is onto for all $w\in\R^m\setminus \{0\}$.
   Another equivalent assumption is that  the square matrix $\sigma A:=\sum_{\a=1}^\ell \sigma_\alpha A^\alpha 
   \in\R^{m\times   m}$ is nonsingular for all $\sigma=(\sigma_1,\dots, \sigma_\ell)\neq 0\in\R^\ell$.

\subsection{Semiconcavity} Following \cite[Definition~1.1.1]{CannarsaSinestrari} and \cite{FigalliRifford10}, we say that a continuous function $f:\Omega\to\R$
is \emph{semiconcave} on the open set $\Omega\subset\R^n$ if there is $C>0$ such that
\begin{equation}\label{lasemi} 
 f(x+h)+f(x-h)-2f(x)\leq 2C \abs{h}^2,
\end{equation}
for all $x,h\in\R^n$ such that the segment $[x-h,x+h]$ is contained in $ \Omega$.
Equivalently, there is $C>0$ so that
\begin{equation*}
 \la f(y)+(1-\la)f(x)-f(\la y+(1-\la )x)\leq C\la(1-\la)\abs{x-y}^2
\end{equation*}
for all $x,y$ such that $[x,y]\subset\Omega$ and $\la\in[0,1]$.
Roughly speaking, second order derivatives of a semiconcave function can be $-\infty$, but they must be bounded from  
above by some positive constant $C<\infty$. See \cite[Chapter~2]{CannarsaSinestrari}.

The following theorem has been shown by Cannarsa and Rifford \cite{CannarsaRifford}, and Figalli and Rifford \cite{FigalliRifford10}:
\begin{theorem}
 Let $M$ be a subRiemannian manifold with subRiemannian distance $d.$   Let  $x_0\in M$ and assume that for all $y\in M$
 every length minimizing path connecting $x_0$ and $y$ is nonsingular. Then, the distance function $y\mapsto d(x_0,y)$ is locally semiconcave on $M\setminus \{x_0\}.$ 
\end{theorem}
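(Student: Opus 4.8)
The plan is to argue entirely with horizontal curves issuing from the origin, never computing a geodesic. Write an admissible curve $\g$ with control $u=(u_1,u_2)$; integrating $\dot\g=u_1X_1+u_2X_2$ from $0$ gives $\g_1(t)=\int_0^t u_1$, $\g_2(t)=\int_0^t u_2$, $\g_3(t)=\int_0^t u_2\g_1$ and $\g_4(t)=\tfrac12\int_0^t u_2\g_1^2$. Since reparametrization by arclength changes neither endpoints nor length, it suffices to treat constant-speed minimizers $\g$, for which $\length(\g)^2=\int_0^1(u_1^2+u_2^2)$. Two elementary observations will be used throughout. First, $\length(\g)\ge\int_0^1\abs{u_2}\ge\bigl|\int_0^1 u_2\bigr|=\abs{\g_2(1)}$, so $d(0,x_2,0,0)\ge\abs{x_2}$; since the abnormal segment joining $0$ to $(0,x_2,0,0)$ is admissible with length $\abs{x_2}$, in fact $d(0,x_2,0,0)=\abs{x_2}$. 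Second, by left translation and \eqref{enghel2} one has $(0,x_2,0,0)^{-1}\cdot(0,x_2,0,x_4)=(0,0,0,x_4)$, hence $d(0,x_2,0,x_4)\le\abs{x_2}+d(0,0,0,x_4)$; and the anisotropic dilations of weights $(1,1,2,3)$ give $d(0,0,0,x_4)=c_0\abs{x_4}^{1/3}$, so $d(0,x_2,0,x_4)\le 2\abs{x_2}$ once $\abs{x_4}$ is small. (When $x_2=0$ this last line already yields \eqref{assetto}.)

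To prove \eqref{assetto} for $x_2\neq 0$, let $\g$ be a constant-speed minimizer from $0$ to $(0,x_2,0,x_4)$, of length $L=d(0,x_2,0,x_4)$. From $\g_1(0)=\g_1(1)=0$ one gets $\norm{\g_1}_\infty\le\tfrac12\int_0^1\abs{u_1}\le\tfrac12\bigl(\int_0^1u_1^2\bigr)^{1/2}$, hence
\[
 \abs{x_4}=\Bigl|\tfrac12\int_0^1 u_2\g_1^2\Bigr|\le\tfrac12\norm{\g_1}_\infty^2\int_0^1\abs{u_2}\le\tfrac18\Bigl(\int_0^1u_1^2\Bigr)\Bigl(\int_0^1u_2^2\Bigr)^{1/2}\le\frac{L}{8}\int_0^1u_1^2,
\]
so $\int_0^1u_1^2\ge 8\abs{x_4}/L$. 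Together with $\int_0^1u_2^2\ge\bigl(\int_0^1u_2\bigr)^2=x_2^2$ this gives $L^2\ge x_2^2+8\abs{x_4}/L$, and substituting $L\le2\abs{x_2}$ yields $L^2\ge x_2^2+4\abs{x_4}/\abs{x_2}$, i.e.\ $L\ge\abs{x_2}+\abs{x_4}/x_2^2$ for $\abs{x_4}$ small. Since $d(0,x_2,0,0)=\abs{x_2}$, this is \eqref{assetto} with $C=1/x_2^2$.

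For \eqref{numerodos}, integrating the flow of $y_1X_1+y_2X_2$ from $(0,x_2,0,0)$ (equivalently, using $e^{y_1X_1+y_2X_2}(0,x_2,0,0)=(0,x_2,0,0)\cdot\exp(y_1X_1+y_2X_2)$ and \eqref{enghel2}) gives
\[
 e^{y_1X_1+y_2X_2}(0,x_2,0,0)=\bigl(y_1,\ x_2+y_2,\ \tfrac12 y_1y_2,\ \tfrac16 y_1^2 y_2\bigr).
\]
Take $y_1=0$: the endpoint is $(0,x_2+y_2,0,0)$, so by the first observation $d\bigl(e^{y_2X_2}(0,x_2,0,0)\bigr)-d(0,x_2,0,0)=\abs{x_2+y_2}-\abs{x_2}$. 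If $x_2\neq 0$, letting $y_2\to 0$ with the sign of $x_2$ makes this equal to $\abs{y_2}$, so the quotient in \eqref{numerodos} equals $\abs{y_2}/y_2^2\to+\infty$ along this sequence; if $x_2=0$ the same follows from $d(0,y_2,0,0)=\abs{y_2}$. Hence the $\limsup$ is $+\infty$.

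The only genuinely delicate step is the lower bound in \eqref{assetto}: it is a competition between the closing constraint $\g_1(1)=0$, which forces $\int_0^1 u_1^2$ to be at least of order $\norm{\g_1}_\infty^2$, and the identity $x_4=\tfrac12\int_0^1 u_2\g_1^2$, which forces $\norm{\g_1}_\infty$ to be at least of order $(\abs{x_4}/L)^{1/2}$; closing the loop needs the a priori bound $L\le 2\abs{x_2}$, for which continuity of $d$ (or the dilation scaling) suffices. Everything else is Cauchy--Schwarz and bookkeeping, and no geodesic is ever computed.
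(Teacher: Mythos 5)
Your proposal does not prove the statement it was supposed to prove. The theorem in question is the general result of Cannarsa--Rifford and Figalli--Rifford: on an arbitrary subRiemannian manifold $M$, if no length minimizer from $x_0$ is singular, then $y\mapsto d(x_0,y)$ is locally semiconcave on $M\setminus\{x_0\}$ in the sense of \eqref{lasemi}. The paper does not reprove this; it quotes it from \cite{CannarsaRifford,FigalliRifford10}, and any proof must produce, locally uniformly in $y$, smooth functions touching $d(x_0,\cdot)$ from above with uniformly bounded second derivatives; this is done by using that every minimizing control is nonsingular (hence strictly normal), so the endpoint map is a submersion at such controls and one can build $C^\infty$ competitors $y\mapsto \length$ of perturbed curves, together with a compactness argument for minimizing controls to get uniform constants. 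Nothing of this kind appears in your text: you work exclusively in the Engel group, whose abnormal line $\{(0,x_2,0,0)\}$ consists of singular minimizers, so the hypotheses of the theorem you were asked to prove are not even satisfied there. What you actually estimate are \eqref{assetto} and \eqref{numerodos}, i.e.\ the paper's \emph{negative} results about failure of semiconcavity, which are logically unrelated to (indeed of opposite flavor from) the statement at hand.

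Even judged as an argument for \eqref{numerodos}, the key step is empty. You set $y_1=0$ and observe that $d(0,x_2+y_2,0,0)-d(0,x_2,0,0)=\abs{y_2}$, so the quotient $\abs{y_2}/y_2^2$ blows up. But a one-sided first-difference quotient divided by $\abs{y}^2$ diverges for \emph{any} function with a nonzero directional derivative (it does so in the Heisenberg group too, where the distance is locally semiconcave away from the origin), so this choice says nothing about semiconcavity: along $X_2$ the symmetric second difference $d(0,x_2+y_2,0,0)+d(0,x_2-y_2,0,0)-2d(0,x_2,0,0)$ is identically zero for small $y_2$ when $x_2\neq0$. The nontrivial content of \eqref{numerodos}, and what the paper proves through the rotated-rectangle argument (Steps 1--7 of Section \ref{cuba}), is the blow-up of $\bigl(d(\lambda,x_2,0,0)-d(0,x_2,0,0)\bigr)/\lambda^2$ in the direction $X_1$ transversal to the abnormal line; combined with the isometry $(x_1,x_2,x_3,x_4)\mapsto(-x_1,x_2,-x_3,x_4)$ this is exactly the failure of the horizontal semiconcavity inequality. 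Your Cauchy--Schwarz argument for \eqref{assetto} is essentially sound (and close in spirit to the paper's proof of \eqref{jaor}), but it cannot substitute for a proof of the general semiconcavity theorem, which is the statement under review.
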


\section{Step-two groups}\label{passettoduetto} 
\subsection{Some (mostly known) facts on  step-two groups }

\subsubsection{Endpoint map and extremal paths}

Let $\R^m\times\R^\ell$ be equipped with the group law \eqref{opusdei2}.
Denote by $e=(0,0)$ the  identity  element of the group  
and by $d(x,t)$ the distance from the origin of  $(x,t)\in\R^m\times\R^\ell$.
 The ODE for the curve $\gamma =(x,t)$ associated with a control $u\in L^2((0,1),\R^m)$ is
\begin{equation}\label{cervello} 
 \dot x(s)=u(s)\qquad \dot t(s)=\frac 12\scalar{x(s)}{A u(s)},
 \qquad \text{with $(x(0),t(0))=(0,0)$}
\end{equation}
where $\scalar{x}{Au}=(\scalar{x}{A^1u}, \dots, \scalar{x}{A^\ell u})$.
Given $u\in L^2(0,1)$, the endpoint  map $E(u)=\gamma(1)=(x(1), t(1))$ has the form  
\[
 E(u)=\Bigl(\int_0^1 u(s)ds,\;
 \frac 12\int_0^1\Bigl\langle\int_0^s u,Au(s)\Bigr\rangle ds\Bigr).
\]
As calculated in \cite{AgrachevGentileLerario}, its differential $dE(u):L^2\to\R^m\times\R^\ell$
has the following form
\begin{equation}
\label{endozzi} 
\begin{aligned}
 dE(u)v &=\Bigl(\int_0^1v,  \frac 12 \int_0^1\Bigl\{
 \Big\langle  \int_0^s u, Av(s)
 \Big\rangle+
  \Big\langle \int_0^s v,A u(s)\Big\rangle\Bigr\} ds\Bigr)
\\&
   =\Bigl(\int_0^1v,\; \int_0^1\Big\langle     A\Bigl(\frac x2 -\int_0^s u \Bigr), v(s)\Big\rangle
   ds
   \Bigr).
\end{aligned}
\end{equation}
We integrated by parts and we let  $\int_0^1 u=x$.

Next, we recapitulate the discussion in \cite{AgrachevGentileLerario}. 
Let $u\in L^2(0,1)$ be a minimizing control for the problem 
$\min\{\norm{u}^2_{L^2(0,1)}: E(u)=(x,t)\}$. 
Since minimizing controls in step-two Canot groups are always 
normal (this follows from the second order analysis of the Goh condition, see \cite[Section~20.5]{Agrachev} or \cite[Theorem~2.22]{Rifford14}), there is  a nontrivial (co)vector $(\xi,\tau)\in\R^m\times \R^{ \ell}$  such that  
\[
\begin{aligned}
0& =   \scalar{u }{v }_{L^2} - \scalar{(\xi,\tau)}{dE(u)v}
\\&
   = \int_0^1 \scalar{u(s)}{v(s)}ds
   - \int_0^1\scalar{\xi}{v(s)}ds
   -\int_0^1\Big\langle     \tau A\Bigl(\frac   x2-\int_0^s u \Bigr), v(s)\Big\rangle
   \Bigr)
    \quad \text{ for all $v\in L^2(0,1)$.}
\end{aligned}
\]
Here $\t A:=\sum_{\a=1}^\ell \t_\a A^\a =-(\t A)^T \in \R^{m\times m}$. 
Since $v\in L^2$ is arbitrary, we get
\begin{equation}\label{nonno} 
u(s)=\xi +\tau A \frac{x}{2}-\tau A\int_0^s u(\r)d\r\quad\text{for all $s\in[0,1]$}.
\end{equation} 
Therefore, $\dot u(s)=-\tau A u(s)$ and then, according to \cite[Proposition~5]{AgrachevGentileLerario},
\begin{equation}\label{joas} 
 u(s)=e^{-\tau As }u,
\end{equation} 
for a suitable $u\in\R^m.$ 
It is easy to recognize that, since $A$ is skew symmetric, then
 $e^{-\t A s}\in O(m)$ is an orthogonal $m\times m$ matrix. 
 Therefore, the path $\gamma$ has constant speed and $\length(\gamma)=\abs{u}$.

Let $u \in L^2(0,1)$ be an abnormal extremal. Then  by definition there is $(\eta,\sigma)\in\R^m\times\R^\ell\setminus\{(0,0)\}$ such that 
\[
\begin{aligned}
 0 & =\scalar{(\eta,\sigma)}{dE(u) v} =\int_0^1
 \Big\langle 
 \eta+\sigma A \Bigl(\frac{x}{2}-\int_0^s u\Bigr), v(s)  ds\Big\rangle
 \quad \text{ for all $v\in L^2(0,1)$.}
\end{aligned}
\]
Since  $v $ is arbitrary,  one gets
\begin{equation}\label{jerso} 
 \eta+\sigma A \frac{x}{2}-\sigma A\int_0^s u=0\quad \text{for all $s\in[0,1]$,}                                                                                \end{equation} 
with the usual convention $\sigma A:=\sum_{\a=1}^\ell \sigma_\a A^\a$.
Note that it must be $\sigma\neq 0$. Otherwise $(\eta, \sigma)$ becomes trivial. 
Differentiating we obtain, according with \cite[Lemma~2.4]{Kishimoto} and \cite{hsu} the condition 
\begin{equation}\label{cervellino} 
\sigma A u(s)=0\quad\text{ for almost all  $s$}
\end{equation} (which implies 
$\eta=0$).
Since $\ker(\sigma A)$ is a subspace of dimension at most $m-2$, the structure of the ODE 
\eqref{cervello} implies that, letting $\Abn(e)=\{\gamma(1):\gamma \text{ is abnormal and
} \gamma(0)=e\}$  
we have  
\begin{equation}\label{giornale} 
\Abn(e)   
\subseteq \bigcup \Bigl\{\G_W: W\text{ subspace of }\R^m,\quad \dim W\leq m-2
\Bigr\}       \end{equation}  
where $\G_W$ is the subgroup 
\begin{equation}
\label{givudoppio} 
\G_W    :=\Span\big\{\bigl(w, \scalar{w'}{Aw''} \bigr): w,w',w''\in W\big\}, \end{equation} 
which is a Carnot group of step $ r  \in\{1,2\}$.    To check this claim, note that 
\eqref{cervellino} ensures that there is a subspace $W\subset \R^m$ of dimension at most $m-2$ such that $u(s)\in W$   a.e.~in $s\in[0,1]$. Then $x(s)=\int_0^s u\in W$ and 
$
t(s)=\frac 12\int_0^s\scalar{x(\rho)}{Au(\rho)}d\rho\in\Span\{\scalar{w'}{A w''}:w', w''\in W\}.
$  
 The inclusion    \eqref{giornale} can be strict, but it  is an equality for free groups (see \cite{LeDonneMontgomeryOttazziPansuVittone} and Remark \ref{rangolo} below).

Furthermore,  \eqref{cervellino} implies that a control of the form  $u(s)=e^{-\tau A s}u$ is  abnormal if and only if 
there is $\sigma\in\R^\ell\setminus \{0\}$  such that
\begin{equation}
\sigma A(\tau A)^m u=0\qquad\text{for all $m\in\N\cup\{0\}$}.
\end{equation} 
It may happen that $\sigma\in\Span\{\tau\}$ and 
in such case, comparing \eqref{nonno} and \eqref{jerso}, we see that $u(s)=e^{-\tau As}u=u
\in\operatorname{ker} \tau A
$ is a constant control.


\subsubsection{Bivectors and skew-symmetric matrices}  
 If we denote by $e_1,\dots, e_m$ the canonical basis of $\R^m$, we define $\wedge^2\R^m:=\Span\{e_j\wedge e_k:1\leq j<k\leq m\}$. 
Given  two vectors $x,y\in\R^m$, the elementary bivector  $z=x\wedge y \in\wedge^2\R^m$  can be expanded as
\[
 x\wedge y=\sum_{j }(x_j e_j)\wedge\sum_k  (y_k e_k)= \sum_{1\leq j<k\leq m}(x_jy_k-x_ky_j)e_j\wedge e_k
 =:\sum_{1\leq j<k\leq m}(x\wedge y)_{jk}e_j\wedge e_k.
\]
On $\wedge^2\R^m$ we define the standard inner product on elementary bivectors
\[
 \scalar{x\wedge y}{\xi\wedge \eta}=\scalar{x}{\xi}\scalar{y}{\eta}
 -\scalar{x}{\eta}\scalar{y}{\xi}
 \quad\text{for all } x,y,\xi,\eta\in\R^m.
\]
This is equivalent to the requirement that the family 
$e_j\wedge e_k,$ with $1\leq j<k\leq m,$ is orthonormal in $\wedge^2\R^m$. 
The inner product $\scalar{z}{\zeta}$ can be extended by linearity to general bivectors $z=\sum_{a=1}^n x_a \wedge y_a $ and $\zeta=\sum_{\a=1}^\nu \xi_\a\wedge \eta_\a$, for any $x_a,y_a,\xi_\a,\eta_\a\in\R^m$.
Note that if $\R^m=V\oplus W$ decomposes as a sum with $V\perp W$ and  we  choose orthonormal 
bases 
$v_1,\dots, v_p$ of $V$
and $w_1,\dots w_q$ of $W$, it turns out that  the family  $\{ v_j\wedge v_k, v_j\wedge w_\a,w_\a\wedge w_\beta: 1\leq j<k\leq p,\quad  1\leq \a<\beta\leq q\}$ is an orthonormal  basis of   $\wedge^2\R^m$ and ultimately
the three terms in the decomposition 
\begin{equation}
\label{joeyy} 
 \wedge^2\R^m=\wedge^2 V\oplus (V\wedge W)\oplus\wedge^2W 
\end{equation} 
are pairwise orthogonal. Here and hereafter we are keeping the short notation $V\wedge W:=\Span\{v\wedge
w:v\in V,w\in W\}$.

Let $M=-M^T\in\R^{m\times m}$ be a  skew-symmetric matrix of rank $2p\leq m$. By spectral theory, there are $p$ two-dimensional pairwise orthogonal subspaces $V_1,\dots V_p$,
$p$ positive numbers $\la_1,\dots, \la_p>0$  and a corresponding orthonormal basis $v_h,v_h^\perp $ of each $V_h$ such that 
\[
 Mv_h=  \la_h v_h^\perp\quad\text{and }\quad M v_h^\perp = -\la_h v_h
 \quad\text{ for all $h=1,\dots,p$.}
\]
In other words, we can write
$
 Mx=\sum_{h=1}^p \la_h \bigl(\scalar{x}{v_h} v_h^\perp - \scalar{x}{v_h^\perp}v_h\bigr)
$.
Observe that $\Im M=\oplus_{h=1}^p V_h$  and $\operatorname{ker} M=(\oplus_h V_h)^\perp$.  
It may happen that $\la_i=\la_j$ for some $i\neq j$.  The generic element of $M$ is $M_{jk}=\Bigl(\sum_{h=1}^p\la_hv_h^\perp\wedge v_h\Bigr)_{jk}$. 
The rank 
of the bivector $\sum_{h=1}^p \la_h v_h^\perp \wedge v_h\in\wedge^2\R^m$ is by definition~$p$.
Moreover, the space $\Span\{v_h,v_h^\perp: 1\leq h\leq p\}$ is called the  \emph{support} of the bivector.

A short computation shows that the exponential of $M$ applied to $x\in\R^m$ is
\begin{equation}
\begin{aligned}
 e^Mx &=\sum_{h=1}^p \cos(\la_h) \bigl(\scalar{x}{v_h}v_h+\scalar{x}{v_h^\perp}v_h^\perp\bigr)
 +\sum_{h=1}^p\sin(\la_h)(\scalar{x}{v_h} v_h^\perp - \scalar{x}{v_h^\perp}v_h)
 \\&
 \qquad
 +\Bigl\{x-\sum_{h=1}^p \bigl(\scalar{x}{v_h}v_h+\scalar{x}{v_h^\perp}v_h^\perp\bigr)
 \Bigr\}.
\end{aligned}
\label{duenove} 
\end{equation} 
\subsubsection{Extremal curves in free groups}
 Let  $ \F_m\equiv \F_{m,2}:=\R^m\times \wedge^2\R^m$ with the group law
 \begin{equation}\label{liberone}
(x,t)\cdot(\xi,\tau)=\Bigl(x+\xi, t+\tau+\frac 12 x\wedge  \xi\Bigr).                                                               \end{equation} 
 Here for convenience of notation we used $\wedge^2\R^m$ instead of $\R^\ell$
 and we made the choice of matrices $A^{jk}\in\R^{m\times m}$ defined as follows: 
 $
  A^{jk}x =x_k e_j- x_j e_k 
 $. Then,  for any $x,\xi\in\R^m$ we indicate with $\scalar{x}{A\xi}\in\wedge^2\R^m$  the bivector
 \begin{equation}
\begin{aligned}
 \scalar{x}{A\xi}=x\wedge\xi=\sum_{1\leq  j<k\leq m}(x\wedge \xi)_{jk}e_j\wedge e_k=
 \sum_{1\leq  j<k\leq m}(x_j\xi_k-x_k\xi_j)e_{j}\wedge e_k.
\end{aligned}
 \end{equation} 
 Let $u(s)=e^{-\t A s}u$ be a normal extremal control. 
   Since $-\tau A$   is a  
 skew-symmetric matrix, there  are $p\leq \frac{n}{2}$,  strictly
 positive 
 numbers $\la_1,\dots, \la_p> 0$ and corresponding pairwise orthogonal vectors  
 $a_1, a_1^\perp,\dots, a_p, a_p^\perp,z$ such that 
 \begin{equation}\label{controllone} 
  \begin{aligned}
u(s)& =\sum_{k=1}^p \bigl(\cos(\la_k s) a_k+\sin(\la_k s) a_k^\perp
\bigr)+ z,         \end{aligned}
 \end{equation} 
 where $\abs{a_k}=\abs{a_k^\perp}>0$ for all $k=1,\dots, p$ and $z\perp\Span\{a_k, a_k^\perp:1\leq k\leq p\}$. Here it may be  $z=0$. 
The  free-group assumption
ensures that the matrix $-\tau A$ can be any skew-symmetric matrix and thus any control $u$ of the form 
\eqref{controllone} is a normal extremal control.

 Moreover, we may assume without loss of generality that  in \eqref{controllone} the following ``nondegeneration condition'' holds
 \begin{equation}
  \label{divero}
0<\la_j\neq \la_k \quad\text{for all}\quad j\neq k.
 \end{equation} 
Otherwise,   if $\la_j=\la_k$ for some $j\neq k$, then
we can write 
 \[\cos(\la_j s) a_j+\sin(\la_j s)a_j^\perp +
 \cos(\la_j s) a_k+\sin(\la_j s)a_k^\perp=
 \cos(\la_j s)( a_j+a_k)+
 \sin(\la_j s)(a_j^\perp+a_k^\perp).
  \]
Observe that if we add to   condition \eqref{divero} the requirement $\la_j<\la_k$ if $j<k$, then all the data $p, \la_k, a_k, a_k^\perp, z$
  are uniquely determined by $u(s)$. 
Finally, the length of the curve~$\gamma_u$ corresponding to the control \eqref{controllone} is 
$\length(\gamma_u)^2=\abs{z}^2+\sum_{k=1}^p \abs{a_k}^2$.
The curve corresponding to the extremal control \eqref{controllone} lives in the subgroup $W\times\wedge^2 W$, where
\begin{equation}\label{doppiovu} 
 W:=\Span\{a_1,a_1^\perp,\dots, a_p, a_p^\perp,z\}.                                                   \end{equation} 
The discussion below shows that $\gamma_u$ is nonsingular in the subgroup $\G_W:=W\times\wedge^2 W$. In general the inclusion $\G_W\subset dE(u)L^2$ is strict. 

In order to characterize singular extremals, we will use the following linear algebra lemma.
\begin{lemma}\label{vanderm} 
 Let $v_1,\dots, v_p\in\R^m$ and let $0<\la_1<\la_2<\cdots<\la_p$ be positive numbers. Then, 
 \begin{equation*}
 \begin{aligned}
\Span\{& 
\la_1^{2k-1} v_1+\la_2^{2k-1} v_2+\cdots +\la_p^{2k-1}v_p:1\leq k\leq p \} 
  \\&
  =
  \Span\{ \la_1^{2k-1} v_1+\la_2^{2k-1} v_2+\cdots +\la_p^{2k-1}v_p:k\in\N\}
  \\&=\Span\{v_1,v_2,\dots,v_p\}.
  \end{aligned}
\end{equation*} 
An analogous statement holds changing the powers $2k-1$ with $2k$.
\end{lemma}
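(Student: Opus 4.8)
The plan is to reduce the statement to the classical invertibility of a Vandermonde matrix. Write $w_k := \sum_{j=1}^p \la_j^{2k-1} v_j$ for $k \in \N$, so that the three spaces appearing in the claim are $\Span\{w_1,\dots,w_p\}$, $\Span\{w_k : k\in\N\}$ and $\Span\{v_1,\dots,v_p\}$. Two of the three needed inclusions are immediate: each $w_k$ lies in $\Span\{v_1,\dots,v_p\}$, so $\Span\{w_k:k\in\N\}\subseteq\Span\{v_1,\dots,v_p\}$; and trivially $\Span\{w_1,\dots,w_p\}\subseteq\Span\{w_k:k\in\N\}$. Hence it suffices to establish the single remaining inclusion $\Span\{v_1,\dots,v_p\}\subseteq\Span\{w_1,\dots,w_p\}$, i.e.\ that each $v_j$ is a linear combination of $w_1,\dots,w_p$; this closes the cycle of inclusions and forces all three spaces to coincide.

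To prove that inclusion I would substitute $\mu_j := \la_j^2$ and $\tilde v_j := \la_j v_j$, so that $w_k = \sum_{j=1}^p \mu_j^{\,k-1}\tilde v_j$ for $1\le k\le p$. Viewing $(w_1,\dots,w_p)$ and $(\tilde v_1,\dots,\tilde v_p)$ as column vectors with entries in $\R^m$ and working componentwise, this system reads $(w_1,\dots,w_p)^T = V(\mu)\,(\tilde v_1,\dots,\tilde v_p)^T$, where $V(\mu) = (\mu_j^{\,k-1})_{1\le k,j\le p}$ is the Vandermonde matrix in the nodes $\mu_1,\dots,\mu_p$. Since $0<\la_1<\dots<\la_p$, the $\mu_j$ are pairwise distinct, so $\det V(\mu)=\prod_{j<k}(\mu_k-\mu_j)\neq 0$ and $V(\mu)$ is invertible. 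Therefore $\tilde v_j\in\Span\{w_1,\dots,w_p\}$ for every $j$, and since $\la_j\neq 0$ we recover $v_j = \la_j^{-1}\tilde v_j\in\Span\{w_1,\dots,w_p\}$, as wanted.

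For the last sentence of the statement the same argument applies with $2k-1$ replaced by $2k$: one has $w_k = \sum_j \la_j^{2k}v_j = \sum_j \mu_j^{\,k-1}\hat v_j$ with $\hat v_j := \mu_j v_j$, so the identical Vandermonde inversion gives $\hat v_j\in\Span\{w_1,\dots,w_p\}$, and $v_j = \mu_j^{-1}\hat v_j$ is legitimate because $\mu_j=\la_j^2\neq 0$. There is no genuine obstacle in this lemma; the only two points that require a word of care are that the relevant Vandermonde nodes are the squares $\la_j^2$ (which are distinct precisely because the $\la_j$ are positive and strictly increasing, so that positivity is actually used), and that the scalar factors $\la_j$ (respectively $\la_j^2$) one divides by at the end are nonzero.
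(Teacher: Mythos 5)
Your proposal is correct and follows essentially the same route as the paper: both proofs reduce the statement to the single inclusion $\Span\{v_1,\dots,v_p\}\subseteq\Span\{w_1,\dots,w_p\}$ and then invoke nonsingularity of a Vandermonde-type matrix. Your version is marginally tidier in that you explicitly factor the coefficient matrix $(\la_j^{2k-1})_{j,k}$ as a diagonal times the genuine Vandermonde matrix in the nodes $\mu_j=\la_j^2$, whereas the paper calls the whole matrix ``Vandermonde'' without spelling out this factorization — but the underlying argument is identical.
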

\begin{proof}
 In both equalities $\subseteq$ is trivial. To accomplish the proof, it suffices to show that
 the set in the first line contains $\Span\{v_1,v_2,\dots,v_p\}$. To see this fact  
 observe that
 \[
\begin{aligned}
 [\la_1v_1+\cdots +\la_pv_p\mid
 &
 \la_1^3v_1+\cdots+\la_p^3 v_p \mid \cdots|
\la_1^{2p-1}v_1+\cdots+\la_p^{2p-1} v_p ]
\\&
=
[v_1|\cdots| v_p]
\begin{bmatrix}
\la_1 &\la_1^3 & \cdots \la_1^{2p-1}
\\
\la_2 &\la_2^3 & \cdots \la_2^{2p-1}
\\
\dots &\dots &\cdots \dots
\\
\la_p &\la_p^3 & \cdots \la_p^{2p-1}
\end{bmatrix}
\end{aligned}
 \]
The thesis follows because the Vandermonde matrix is nonsingular.   
\end{proof}

Next we recall the characterization of singular  extremal controls (see also \cite{LeDonneMontgomeryOttazziPansuVittone}). 
Let $u$ be a normal extremal control of the form
\eqref{controllone}  satisfying the nondegeneration condition \eqref{divero}.   Then, $u$ is singular if and only if  there is 
a nontrivial skew-symmetric matrix $\s A \in\R^{m\times m}$ such that $\sigma A u(s)=0$ for all $s$. By properties of the kernel of skew-symmetric matrices this 
is equivalent  
to say  
that there is a $(m-2)$-dimensional
subspace $W\subseteq\R^m$ such that 
$u(s)\in  W$ for all $s$. Equivalently, $\dim \Span\{u^{(k)}(0):k\in\N\cup\{0\}\}\leq  m-2$, which means
\begin{equation}\label{libertas} 
 \dim\Span\Bigl\{ z+\sum_{k=1}^p a_k,
\sum_{k=1}^p\la_k a_k^\perp,\sum_{k=1}^p \la_k^2 a_k,\sum_{k=1}^p \la_k^3 a_k^\perp,\dots 
 \Bigr\}\leq m-2.
\end{equation} Since we assume \eqref{divero}, using the lemma above, it is easy to recognize that this is equivalent to the requirement
\begin{equation}
 \label{jersey}
 \dim\Span\{a_1,a_1^\perp, \dots, a_p,a_p^\perp,z\}\leq m-2
\end{equation} 

\begin{remark}\label{rangolo} Formula \eqref{jersey} is related with the parametrization of the abnormal set provided in 
formula~(3.9) in \cite{LeDonneMontgomeryOttazziPansuVittone}. Indeed it implies that
\begin{equation}\label{pansu} 
\Abn^{\textup{nor}}(e)=  \operatorname{Abn}(e)= \bigcup\Bigl\{W\times \wedge^2 W: W\subset\R^m\quad \dim W =  m-2\Bigr\}
\end{equation}  
where $\Abn^{\textup{nor}}(e)$ indicates the endpoints of normal-abnormal curves leaving from the origin.
The first $\subseteq$ inclusion is obvious and the second follows from \eqref{giornale}. The fact that
$\Abn^{\textup{nor}}(e)$ contains the union on the right-hand side can be seen as follows.
Let $W\subseteq\R^m$ be a subspace of dimension $\dim W=m-2$. Then $W\times\wedge^2 W$ is isomorphic to the free two-step group  with $m-2$ generators. Therefore, for each point $(w,\xi)\in W\times\wedge^2 W$ there is a control of the form \eqref{controllone} with $a_1,a_1^\perp, \dots, a_p, a_p^\perp,z\in W$ and such that the curve $\gamma$ arising from such control  connects the origin with $(w,\xi)$.
\end{remark}


\subsubsection{Extremals in general two-step groups}
If $(\R^m\times\R^\ell,\cdot)$ is a 
  two-step  Carnot group  
 with law \eqref{opusdei}, normal extremal curves can be described similarly to the free case, but there are some differences.  Indeed, given an extremal control
$u(s)=e^{-\t A s}u$, while  in the free case $-\t A$ was the most general skew-symmetric matrix, 
here, as observed by \cite{AgrachevGentileLerario}, the matrix $-\tau A$ should belong to the  
subspace of $\mathfrak{so}(m),$
generated by $A_1,\dots, A_\ell$. 
Anyway, applying spectral theory to the matrix $-\tau A$, we see that $u(s)$ can be written in the form 
 \begin{equation}\label{controllone2} 
  \begin{aligned}
u(s)& =\sum_{k=1}^p \cos(\la_k s) a_k+\sin(\la_k s) a_k^\perp + z,         \end{aligned}
 \end{equation} 
 where, as in the free case 
 we assume without loss of generality 
 the nondegeneration condition
 \begin{equation}\label{controllone3} 
0<\la_j\neq \la_k \quad\text{for all}\quad j\neq k.
 \end{equation} 
Again, making the further requirement $\la_j<\la_k$ if $j<k$, then all the data $p, \la_k, a_k, a_k^\perp, z$
  are uniquely determined by $u(s)$. 
If we let, as in the free case 
$ W:=\Span\{a_1,a_1^\perp,\dots, a_p, a_p^\perp,z\}$, then  by \eqref{cervello},  it 
turns out that the curve corresponding to the extremal control \eqref{controllone2} lives in the subgroup $\G_W$ introduced in~\eqref{givudoppio}.

The description of singular extremals is less precise than in the free case. However, 
by \eqref{cervellino} and Lemma \ref{vanderm}, we   can say that a control of the form \eqref{controllone2} under the nondegeneration condition is singular if and only if there is $\sigma\in\R^\ell$ such that the associated subspace~$W$ satisfies
 $W\subset \ker(\sigma A).$
Equivalently, there is $\sigma\neq 0$ such that 
$\sigma\perp \scalar{Aw}{y}$ in $\R^\ell$ for all $w\in W$ and $y\in\R^m$. This ensures that $W\subset\R^m$ has dimension at most $m-2$.
Furthermore, under \eqref{controllone3}, it turns out that 
$\gamma_u$ is nonsingular in the subgroup $\G_W $ defined in \eqref{givudoppio} (if it would be singular, then  $\{u(s):s\in[0,1]\}$  would be contained in a subspace of dimension at most $ \operatorname{dim}W-2$).

\begin{remark}\label{confronto} 
The 
objects of the discussion above have a strict 
relation  with the abnormal varieties $Z^\lambda$ studied in  \cite{LeDonneLeonardiMontiVittone} and \cite[Section~3.1]{LeDonneMontgomeryOttazziPansuVittone}. Indeed, fixed the basis $X_1,\dots, X_m, T_1,\dots T_\ell$ of $\gg=V_1\oplus V_2$
as in  Section \ref{cagatina} and the  dual basis $\eta_1,\dots,\eta_m,\theta_1,\dots, \theta_\ell$ of $\gg^*=V^*_1\oplus V^*_2$, then, choosing the  covector 
$\lambda=\sum_\a \sigma_\a\theta_\a\in V_2^*$, a computation shows that, in the exponential coordinates \eqref{espoprimo} 
\[
 Z^\la
 = \{(x,t)\in\R^m\times\R^\ell:\sigma A x=0\}=\ker(\sigma A)\times\R^\ell.
\]
where $\sigma A=\sum_\a\sigma_\a A^\a$ as usual. 
Thus, $\mathfrak{z}^\la \cap V_1 =\{ \sum_j x_jX_j:x\in \ker \sigma A\}\subset V_1$ and 
\begin{equation*}
H^\lambda =\Span\bigl\{
(x,
\scalar{\xi}{A\eta}): x,\xi,\eta\in 
\ker(\sigma A)
\bigr\}                     \end{equation*} 
is the subgroup appearing in \cite[Eq.~(3.1)]{LeDonneMontgomeryOttazziPansuVittone}.
\end{remark}

Next we calculate the image of the differential of the endpoint map at extremal controls
in terms of the associated subspace~$W$. 
 \begin{proposition}\label{dimensa} 
Let $u\in L^2(0,1)$ be a normal extremal control  
of the form 
\eqref{controllone2} satisfying the nondegeneration condition \eqref{controllone3}. Then,  if 
$W=\Span\{a_1,a_1^\perp,\dots, a_p, a_p^\perp,z \}$, we have
  \begin{equation*}
   \Im dE(u)=\Span \{(\xi, \scalar{Aw}{\eta}): w\in W\; \xi,\eta\in\R^m\}.
  \end{equation*} 
 \end{proposition}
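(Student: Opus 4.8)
The plan is to compute $\Im dE(u)$ directly from the formula \eqref{endozzi}, namely
$dE(u)v=\bigl(\int_0^1 v,\ \int_0^1\langle A(\tfrac x2-\int_0^s u),v(s)\rangle\,ds\bigr)$.
First I would observe that the first component of $dE(u)v$ ranges over all of $\R^m$, so it suffices to understand, for each prescribed value $\xi=\int_0^1 v$ of the first component, which second components in $\R^\ell$ are attainable; by linearity the image is then $\Span$ of $\{(\xi,0):\xi\in\R^m\}$ together with $\{(0,\zeta):\zeta$ attainable with $\int_0^1 v=0\}$. Writing $\psi(s):=\tfrac x2-\int_0^s u(\rho)\,d\rho$, the second component is $\int_0^1\langle A\psi(s),v(s)\rangle\,ds=\sum_\a\int_0^1\langle A^\a\psi(s),v(s)\rangle\,ds$. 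Since $A^\a$ is skew-symmetric, $\langle A^\a\psi(s),v(s)\rangle$ only sees $v(s)$ through its component not annihilated by $A^\a\psi(s)$; the key point is to identify the span of the curve $s\mapsto\psi(s)$.

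The crucial computation is that $\Span\{\psi(s):s\in[0,1]\}=W$. Indeed $\dot\psi(s)=-u(s)$, and $u(s)$ has the form \eqref{controllone2}, so $\Span\{u(s):s\}$ together with $\psi(0)=\tfrac x2=\tfrac12\int_0^1 u$ spans exactly $W=\Span\{a_1,a_1^\perp,\dots,a_p,a_p^\perp,z\}$ (using that $x=\int_0^1 u\in W$ and that $\{\cos\la_k s,\sin\la_k s,1\}$ gives back each $a_k,a_k^\perp,z$ by the Vandermonde-type argument behind Lemma~\ref{vanderm}, or more simply by evaluating derivatives at $s=0$ as in \eqref{libertas}--\eqref{jersey}). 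Hence every value $A\psi(s)$ lies in $AW:=\Span\{Aw:w\in W\}$, and conversely, since the curve $\psi$ sweeps out a set spanning $W$, one can choose $v$ supported where $\psi(s)$ is close to any prescribed $w\in W$ to realize $\langle A^\a w,v(s)\rangle$-type integrals; a clean way is to use finitely many sample times $s_1,\dots,s_N$ with $\psi(s_i)$ spanning $W$ and $v$ a sum of narrow bumps at the $s_i$, with the bump heights chosen freely in $\R^m$ and adjusted so that $\int_0^1 v=0$. This shows
$\{(0,\zeta):\zeta\text{ attainable}\}=\Span\{(0,\langle Aw,\eta\rangle):w\in W,\ \eta\in\R^m\}$,
and combining with the $(\xi,0)$ directions gives exactly $\Span\{(\xi,\langle Aw,\eta\rangle):w\in W,\ \xi,\eta\in\R^m\}$, which is the claimed image (note $\langle Aw,\eta\rangle\in\R^\ell$ absorbs the constant shift coming from a nonzero $\int_0^1 v$, so no genericity of $\xi$ is lost).

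The main obstacle I expect is the surjectivity direction: showing that every $(0,\langle Aw,\eta\rangle)$ with $w\in W$, $\eta\in\R^m$ is actually hit, with the constraint $\int_0^1 v=0$ imposed. The care needed is (i) to justify that sampling $\psi$ at enough times recovers all of $W$ — this is where the nondegeneration condition \eqref{controllone3} enters, via Lemma~\ref{vanderm}, to guarantee $\Span\{\psi(s)\}$ is genuinely $W$ and not a proper subspace — and (ii) to arrange the bump corrections so the first-component constraint is met without destroying the second component, which is routine once one notes that adding a bump proportional to a vector in $\ker(A\psi(s))^\perp{}^\perp$... more simply, one can always fix up $\int_0^1 v=0$ by adding a correction bump at a time $s_0$ with $\psi(s_0)=0$ if such exists, or otherwise by a two-bump cancellation that leaves the bilinear integral unchanged up to a term of the already-available form $\langle Aw,\eta\rangle$. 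Everything else is bookkeeping with the bilinear form and the skew-symmetry of the $A^\a$.
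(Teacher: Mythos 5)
Your proposal is correct in substance and follows essentially the same route as the paper: read $\Im dE(u)$ off the explicit formula \eqref{endozzi} by testing against approximate identities, then identify the resulting span with $W$ via the nondegeneration condition \eqref{controllone3} and Lemma~\ref{vanderm}. The technical implementation differs slightly. The paper concentrates everything at $s=0$ and tests against the family $\phi_n^\ell(s)\xi$ approximating $\delta_0^{(\ell)}\xi$; for $\ell\geq 1$ one has $\int_0^1\phi_n^\ell=0$ automatically, and the limits give $\bigl(0,\pm\langle Ax^{(\ell)}(0),\xi\rangle\bigr)$, whose span is controlled by $\Span\{x^{(\ell)}(0):\ell\geq1\}=W$. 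You instead sample the curve $\psi(s)=\tfrac{x}{2}-\int_0^s u$ at several times $s_i$ with $\psi(s_i)$ spanning $W$ (your identity $\Span\{\psi(s):s\in[0,1]\}=W$ is correct and equivalent to the paper's, since $\psi(0)=x/2\in W$ and $\psi^{(\ell)}(0)=-x^{(\ell)}(0)$ for $\ell\geq1$) and test with bumps near the $s_i$. This buys nothing extra here and costs you the work of enforcing $\int_0^1 v=0$, which the paper's choice of $\delta_0^{(\ell)}$ sidesteps. Your ``two-bump cancellation'' is the right fix but should be made explicit: subtracting the contributions of bumps of the same height $\eta$ at $s_i$ and $s_j$ yields $\bigl(0,\langle A(\psi(s_i)-\psi(s_j)),\eta\rangle\bigr)\in\Im dE(u)$, and since $\psi(s_i)-\psi(s_j)=x(s_j)-x(s_i)$ with $x(0)=0$, these differences still span $W$; then $(\xi,0)\in\Im dE(u)$ follows by subtracting $\bigl(0,\langle A\psi(s_i),\xi\rangle\bigr)$ from a single-bump contribution $\bigl(\xi,\langle A\psi(s_i),\xi\rangle\bigr)$.

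One caution on the opening move: the assertion that ``by linearity the image is $\Span$ of $\{(\xi,0)\}$ together with $\{(0,\zeta):\zeta\text{ attainable with }\int_0^1 v=0\}$'' is not a general fact about linear maps with surjective first projection; it requires knowing that $\{(\xi,0):\xi\in\R^m\}\subseteq\Im dE(u)$, which is one of the things to be proved. It does hold here --- precisely because $\tfrac x2=\psi(0)\in W$, so $\langle A\,\tfrac x2,\xi\rangle$ is a second component that is also attainable with zero first component --- but in a final write-up you should establish the $\supseteq$ inclusion directly, as the paper (and the corrected version of your bump argument) does, rather than asserting the decomposition up front.
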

\begin{proof}
Formula \eqref{endozzi} immediately implies $\subseteq$.

To see $\supseteq$, we test formula \eqref{endozzi} against    sequences of smooth functions approximating the $\d$ function and its derivatives of order $\ell\geq 1$. Precisely, let $\phi\in C_c^\infty(\left]-1,1\right[)$ be a nonnegative averaging kernel with $\int_{-1}^1\phi(s) ds=1$.  Then define  the family $(\phi_n)_{n\geq 2}$, by  $\phi_n(s):=n\phi(ns-1) $. It turns out that $\phi_n\in C_c^\infty(\mathopen]0,1\mathclose[)$ and $\phi_n$
is an approximation of the Dirac mass at $s=0$ as $n\to \infty$.
Moreover, 
for  $\ell=0, 1,2,\dots$, $\xi\in\R^m$ and $n\in\N$, the family $(\phi_n^\ell)_{n\geq 2}$, 
$\phi_n^\ell(s):=(\frac{d}{ds})^\ell\phi_n(s) $ approximates the $\ell$-th derivative of the Dirac mass, as $n\to \infty$.

Let us take $\xi\in\R^m$ and define  $v_n^\ell(s)=\phi_n^\ell(s) \xi$.
  Testing \eqref{endozzi} with $(v_n^0)_{n\in\N}$ and passing to the limit as $n\to\infty$ we find
  \[
\Im    dE(u)\supseteq  \Big\{\bigl(\xi,\Big\langle A\frac{x}{2},\xi\Big \rangle\bigr):\xi\in\R^m\Big\}.
  \]
If instead $\ell\geq 1$, calculating $dE(u)v_n^\ell$ and letting $n\to \infty$, we find
\[
 \Im    dE(u)\supseteq  \Big\{\bigl(0,-\scalar{A x^{(\ell)}(0)}{\xi}) :\xi\in\R^m\Big\}\quad \text{for all $\ell=1,2,\dots$.}
\]
The proof is easily concluded because $\Span \{x^{(\ell)}(0):\ell\geq 1\}=W$.
\footnote{Notice that $L_W:= \{ \scalar{Aw}{\eta}:w\in W,\; \eta\in\R^m \}$ in general is not a subspace of 
$ \R^\ell$.   This for instance happens if $\R^m\times\R^\ell=\R^4\times\wedge^2\R^4$, $\scalar{x}{Ay}=x\wedge y$ and $W=\Span\{e_1,e_2\}$. 
   In such case, $e_1\wedge e_3$ and $e_2\wedge e_4\in
     L_W   $, 
   but 
$e_1\wedge e_3 + e_2\wedge e_4\notin L_W$.
}
\end{proof}

\begin{remark}
 In the nonfree case, it is not true that $\Abn(e)$ can be parametrized as $\bigcup \{\G_W 
 :\dim(W)\leq m-2\}$, where $\G_W$ is the subgroup in \eqref{givudoppio}. A counterexample is given by
 a direct product $\H_{x_1,x_2,t}\times \R_{x_3}$ of the Heisenberg group with the Euclidean line.  
Here, for $x,\xi\in\R^3$ we define
$\scalar{x}{A\xi}=x_1\xi_2-x_2\xi_1$
In this case,  for any $w=(w_1,w_2,0)\in\R^3\setminus \{(0,0,0)\}$, a curve of the form $\gamma(s)=(sw_1, sw_2,0,0)$ is an extremal and is contained in the subgroup
$\G_W$ where $W= \Span\{w\}$ is one-dimensional. However, $\gamma$ is nonsingular in the product.
\end{remark}

Using Proposition \ref{dimensa} 
it is easy to see that abnormal minimizing curves  appear if and only if 
the M\'etivier condition fails. This statement is implicitly contained in \cite[Eq.~3.2)]{LeDonneMontgomeryOttazziPansuVittone}.
\begin{proposition}
 Let $\G=\R^m\times\R^\ell$ be the group in \eqref{opusdei}. Then there exists a   nontrivial
   abnormal length minimizing path if and only if the M\'etivier condition fails.
\end{proposition}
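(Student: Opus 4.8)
The plan is to deduce this from Proposition~\ref{dimensa} together with the fact recalled above that in a two-step Carnot group every length minimizer is a normal extremal; in this way the whole statement reduces to the elementary observation that the M\'etivier condition fails precisely when some matrix $\sigma A=\sum_{\alpha}\sigma_\alpha A^\alpha$ with $\sigma\ne 0$ has nonzero kernel.

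First I would prove necessity of the failure of M\'etivier. Let $\gamma$ be a nontrivial abnormal minimizer leaving from the origin. Being a minimizer it is normal, so $\gamma=\gamma_u$ for a normal extremal control $u$, which after collapsing repeated frequencies may be written in the form \eqref{controllone2} under the nondegeneration condition \eqref{controllone3}; set $W=\Span\{a_1,a_1^\perp,\dots,a_p,a_p^\perp,z\}$, a nonzero subspace because $\gamma$ is non-constant. By Proposition~\ref{dimensa}, $\Im dE(u)=\Span\{(\xi,\scalar{Aw}{\eta}):w\in W,\ \xi,\eta\in\R^m\}$, which contains $\R^m\times\{0\}$ (take $\eta=0$); hence $\Im dE(u)=\R^m\times L$ with $L:=\Span\{\scalar{Aw}{\eta}:w\in W,\eta\in\R^m\}$, and abnormality of $u$ (i.e. $dE(u)$ not open, hence $\Im dE(u)\ne\R^m\times\R^\ell$) forces $L\subsetneq\R^\ell$. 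Therefore there is $\sigma\in\R^\ell\setminus\{0\}$ with $0=\scalar{\sigma}{\scalar{Aw}{\eta}}=\scalar{(\sigma A)w}{\eta}$ for all $w\in W$ and $\eta\in\R^m$, i.e. $W\subseteq\ker(\sigma A)$. Choosing any $w\in W\setminus\{0\}$ gives $\sigma A w=0$ with $\sigma\ne 0$, so $\sigma A$ is singular and the M\'etivier condition fails.

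Then I would prove sufficiency. Assume M\'etivier fails, so there is $\sigma\in\R^\ell\setminus\{0\}$ with $\sigma A$ singular; choose $w\in\ker(\sigma A)\setminus\{0\}$ and consider the constant control $u\equiv w$. Since $\scalar{w}{A^\alpha w}=0$ for each skew-symmetric $A^\alpha$, the system \eqref{cervello} gives $\gamma_u(s)=(sw,0)$, a straight segment. This curve is nontrivial since $w\ne 0$; it is length minimizing, because any horizontal curve joining $0$ to $(w,0)$ has length $\int_0^1\abs{v}\ge\abs{\int_0^1 v}=\abs{w}=\length(\gamma_u)$; and it is abnormal, either directly from \eqref{jerso} with the nonzero covector $(\eta,\sigma)=(0,\sigma)$, or from Proposition~\ref{dimensa} with $W=\Span\{w\}$, since $\scalar{\sigma}{\scalar{Aw}{\eta}}=\scalar{(\sigma A)w}{\eta}=0$ for all $\eta$ shows $\Im dE(u)\subseteq\R^m\times\sigma^\perp$ is proper. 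This gives the equivalence.

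I do not expect a genuine obstacle here; the argument is essentially bookkeeping. The two points that need a little care are matching conventions — verifying the identity $\scalar{\sigma}{\scalar{Aw}{\eta}}=\scalar{(\sigma A)w}{\eta}$ with the definition of $\scalar{\cdot}{A\cdot}$ used in \eqref{opusdei} and in Proposition~\ref{dimensa} — and justifying the normal form, i.e.\ recalling that merging equal frequencies $\lambda_j=\lambda_k$ in \eqref{controllone2} changes neither the associated subspace $W$ nor the fact that $\gamma$ is minimizing and abnormal, so that the normal-extremal normal form \eqref{controllone2} may indeed be assumed for $\gamma$.
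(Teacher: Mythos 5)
Your argument is correct and follows the paper's own proof essentially verbatim: both directions rest on Proposition~\ref{dimensa} combined with the observation that failure of M\'etivier is equivalent to the existence of $\sigma\neq 0$ with $\ker(\sigma A)\neq\{0\}$, and the sufficiency direction in each case exhibits the line $\gamma(s)=(sw,0)$ with $w\in\ker(\sigma A)$. The only differences are cosmetic: you make explicit the identity $\Im dE(u)=\R^m\times L$ and spell out the (trivial) minimality of the segment, steps the paper leaves implicit.
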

\begin{proof}
 Let $u\in L^2(0,1)$ be a   nonzero   abnormal length minimizing control. Since $u$ must be normal-abnormal, it has the form \eqref{controllone2} and we may assume the nondegeneration \eqref{controllone3}. Applying Proposition \ref{dimensa}, we see that if 
 $   0\neq   w\in W$, then the dimension of $\Span\{\scalar{Aw}{\eta}:\eta\in\R^m\}$ must be strictly less than $\ell$. This means that the M\'etivier condition fails.
 
 On the other side, if the M\'etivier condition fails, let $w\in\R^m\setminus\{0\}$ be such that $ \eta\mapsto \scalar{Aw}{\eta}$ is not onto from $\R^m$ to $\R^\ell$. Then, by Proposition \ref{dimensa}, we see that the curve $\gamma(s)=(sw,0)$ is an abnormal minimizer. 
\end{proof}

\subsection{Failure of semiconcavity in two-step Carnot groups}
\subsubsection{Free groups}
We show  estimate \eqref{liberato} of Theorem \ref{teoremalibero}.
%
 Namely, given  $(  x,  t)=\gamma(1)$, final point of an abnormal minimizer~$\gamma$,
 we want to show that  there is $ \s \in\wedge^2\R^m$ such that
\begin{equation}\label{giovannino} 
 \liminf_{\beta\to 0}\frac{d(  x,  t+  \beta   \s )- d(  x,  t)}{\abs{\beta}}>0                                                                            \end{equation} 
\begin{proof}[Proof of \eqref{giovannino}]
 Let $(  x,   t)=\gamma(1)=(x(1),t(1))$, where $\gamma$ is   a normal-abnormal extremal. This means that $\gamma$ originates from a control of the  form $
u(s) =\sum_{k=1}^p \cos(\la_k s) a_k+\sin(\la_k s) a_k^\perp + z,   $ 
 where  as usual we assume that $0<\lambda_j<\la_k$  for all $ j < k$
and moreover we have  the singularity condition
\begin{equation*} 
\dim \Span\{a_1,a_1^\perp, \dots, a_p, a_p^\perp,z\} \leq m-2,
\end{equation*} 
(here $z$ may  possibly vanish).
Let $\Span\{a_1,a_1^\perp, \dots, a_p, a_p^\perp,z\}=:W.$
Let
$V:=W^\perp=
\Span\{a_1,a_1^\perp, \dots, a_p, a_p^\perp,z\}^\perp$.
The singularity condition ensures that 
$\dim V  \geq 2$. 
Let $\F_V: = V\times\wedge^2 V$ be the subgroup generated by $V\times\{0\}$. 
We claim that for any nonzero   bivector   $\s \in\wedge^2 V$,
we have
\begin{equation}\label{key}
 d\big(  x,  t+\beta \s \bigr)\geq d\big(  x,  t\bigr)+C\abs{\beta}.
\end{equation} 

To prove the claim,  fix $\sigma\in\wedge^2 V\setminus \{0\}$ and let  $\beta\in\R$. Take a minimizing control $u\in L^2(0,1)$ and let  $\gamma=(x,t):[0,1]\to\F_{m,2}$ be the corresponding minimizing path joining $(0,0)$ and $(x,t+\beta\sigma)$. Assume also the constant speed condition
\[
\abs{u(s)}= \abs{\dot x(s)}=d\big(  x,  t+\beta \s \bigr)\quad\forall s\in[0,1].
\]
Decompose orthogonally 
\begin{equation}\label{unodiuno} 
\begin{aligned}
u(s)&=: u_V(s)+ u_W(s)\in V\oplus W
\\
x(s)&=\int_0^s u=
 :x_V(s)+ x_W(s) 
 \in V\oplus  W
 .
\end{aligned}
\end{equation}  
Thus,
\begin{equation}\label{duedidue} 
\begin{aligned}
 t(s)&=\frac 12\int_0^s (x_V +x_W)\wedge (u_V+u_W)
 \\&
   = \frac 12\int_0^s x_V \wedge  u_V 
   +
   \frac 12\int_0^s  x_V \wedge u_W 
   +
   \frac 12\int_0^s x_W\wedge u_V
   +\frac 12 \int_0^s x_W\wedge u_W
   \\&
   =: t_V(s)+ t_{*}(s)+ t_W(s) 
   \in\wedge^2 V \oplus (V\wedge W)\oplus  \wedge^2 W,
\end{aligned}
\end{equation}
where we let \[t_V(s)= 
\frac 12\int_0^s x_V \wedge  u_V\in\wedge^2V
\quad\text{and}\quad
t_W(s)= \frac 12 \int_0^s x_W\wedge u_W\in\wedge^2 W.
\] 
 By \eqref{joeyy}, the three     terms in the last sum are pairwise orthogonal. 
The path  $s\mapsto \gamma_V(s)=(x_V(s),t_V(s)) \in V\times \wedge^2 V$ is admissible in the Carnot group $V\times \wedge^2V$  and the path $\gamma_W$ is admissible in $W\times\wedge^2 W$.

Next we look at the final point of $\gamma_V$. 
Since \[ W
\ni   x=x(1)=x_V(1)+x_W(1) \in 
V\oplus W \]
we have 
$x_V(1)=0$ and $x_W(1) =  x$.
Moreover, since
\[
  \wedge^2 W \oplus\wedge^2
  V\ni   t+ \beta \s =t(1)=t_V(1)+
  t_* (1)+t_W(1)\in \wedge^2 V\oplus(V\wedge W)\oplus
  \wedge^2 W, 
\]
it must be   $t_*(1)=0\in V\wedge W$,
$t_V(1)= \beta \s\in \wedge^2 V$ and $t_W(1)=  t\in \wedge^2W$.

Ultimately, since the path $\gamma_V$ connects the origin with $(0,\beta \s )\in \F_V=V\times\wedge^2 V$,
we have
\[
 \int_0^1\abs{u_V} \geq d_{V\times\wedge^2 V}(0,\beta \s)
 \geq C\abs{\beta}^{1/2}.
\]
Moreover, since $\gamma_W $ connects the origin with $(  x,  t)\in \F_W$,
we have
\begin{equation}
 \int_0^1\abs{u_W}\geq d_{\F_W}(  x,  t)\geq  d_{\F_{V\oplus  W}}(  x,  t)=d(x,t).                                                                                                \end{equation} 
By the constant-speed assumption $\abs{u(s)}=d(  x,  t+\beta \s)$ for all $s$, 
\[\begin{aligned}
 d(  x,   t+\beta \s)^2
 &=\int_0^1\abs{u}^2=\int_0^1\abs{u_V}^2+\int_0^1\abs{u_W}^2
 \\
 &\geq\Bigl(\int_0^1\abs{u_V}\Bigr)^2 
 + \Bigl(\int_0^1\abs{u_W}\Bigr)^2
 \\&\geq C\abs{\beta}+d(  x,  t)^2.
\end{aligned}
\]
To conclude the proof, observe that if $(x,t)=(0,0)$, then $d(0,\b\s)\geq C\abs{\b}^{1/2}\geq C\abs{\b}$ for  $\abs{\b}<1$. If instead $(x,t)\neq (0,0)$, then 
\[
\begin{aligned}
 d(x, t+\b\s) &\geq d (x,t)\Bigl(1+\frac{C\abs{\b}}{d(x,t)^2}\Bigr)^{1/2}
 \geq d(x,t)+\frac{C'}{d(x,t)}\abs{\b},
\end{aligned}
\]
for small $\abs{\beta}$.
This proves \eqref{key}.
\end{proof}

\subsubsection{General two-step groups}
Here we prove  estimate \eqref{thefirst}, which shows that local semiconcavity fails for all two-step Carnot groups at  abnormal points  of the form $(w,0)\in\R^m\times\R^\ell$.
The case of a general abnormal point  seems to be technically more complicated and we do not discuss it. A  procedure   of lifting to a free group can be  useful to discuss some specific examples, but the general case seems to require a deeper understanding of two-step Carnot group. 
%
%

\begin{proof}[Proof of \eqref{thefirst}] Let $w\in\R^m$ be a unit vector such that the map
$y\mapsto \scalar{Aw}{y}$ is not onto from $\R^m$  to   $\R^\ell$. We claim that estimate \eqref{thefirst} holds for any  vector $\s\in\R^\ell\setminus\{0\}$  
such that 
\begin{equation}\label{eperpe} 
\scalar{Aw}{y}\perp \s\quad \text{in $\R^\ell$}\quad\forall\; y\in\R^m.
\end{equation} 
Assume without loss of generality that $\abs{\sigma}=1$  in $\R^\ell$.
Let $V:=\Span\{w\}^\perp=:W^\perp$ and
\begin{equation*}
 \G_V:=\Span\{(v ,\scalar{A v' }{v'' }): v,v',v''\in V\}.
\end{equation*}

We claim that there is $C>0$ such that $d(w,\beta \s)\geq 1+
C\abs{\beta}$ uniformly in  $\beta\in[-1,1] $.
To show the claim, let $ \gamma=(x ,t):[0,1]\to \G $ be a length minimizing constant-speed path, i.e. $\abs{  u(s)}= d(w,\beta \s) $ 
for all $s\in[0,1]$.
Decompose
\begin{equation}\label{giacomino} 
\begin{aligned}
 u(s)&= u_V(s)+ u_W(s)\in V\oplus W
 \\
 x(s)&
 =\int_0^s u=:x_V(s)+x_W(s) \in V\oplus W.
\end{aligned}
\end{equation}
%
Thus,
\begin{equation}\label{giacometto} 
\begin{aligned}
 t(s) & = \frac 12\int_0^s\scalar{x } {Au } 
 \\&
    =\frac 12\int_0^s\Scalar{x_V } {A  u_V  } 
    +  \frac 12\int_0^s(\scalar{x_V  } {A u_W  }+
    \scalar{x_W}{A u_V}) 
    +  \frac 12\int_0^s\scalar{ x_W } {Au_W   } 
\\&
=:t_V(s)+(t(s)-t_V(s)).
\end{aligned}
\end{equation}
where we put $
t_V(s):=\frac 12\int_0^s\Scalar{x_V } {A u_V  } $. Note that  the curve 
$\gamma_V(s)=(x_V(s),t_V(s))$ is an admissible curve 
in $\G_V$. The decomposition \eqref{giacomino} proves that $x_V(1)=0$.
Formula \eqref{giacometto} and the orthogonality condition \eqref{eperpe} tell that $\beta \s
\perp t(1)-t_V(1)$. Therefore, the required equality $t(1)=\beta \sigma$
implies that 
\[
\abs{ t_V(1)}^2= \Bigl|\beta \s-( t(1)-t_V(1))\Bigr|^2=\beta^2+
\abs{t(1)-t_V(1)}^2\geq \beta^2,
\]
because $\abs{\s}=1$.
Standard properties of two-step groups give
\begin{equation*}
\length_{\G_V}(\gamma_V)=\int_0^1\abs{u_V(s)}ds 
\geq C\abs{\beta}^{1/2}.\end{equation*} 
A second obvious estimate concerns the curve
 $\zeta(s):=\scalar{x(s)}{w}$.
Since it satisfies $\zeta(0)=0$ and $\zeta(1)=1$, we have  
\begin{equation*}
\int_0^1\abs{u_W }=\int_0^1\abs{\dot \zeta}\geq 1.
\end{equation*} 

To conclude the argument, starting from the constant speed property of $\gamma=(x,t)$ and using Cauchy-Schwarz, we find
\begin{equation*}
\begin{aligned}
 d (w,\beta \s)^2
 &
 =\int_0^1\abs{u}^2=
 \int_0^1\abs{u_W}^2+\int_0^1\abs{u_V}^2
 \\&
    \geq \Bigl(\int_0^1\abs{u_W}\Bigr)^2+
    \Bigl(\int_0^1\abs{u_V}\Bigr)^2
    \\&
    \geq 1+C\abs{\beta}
    =d(w,0)^2+C\abs{\beta}
 \end{aligned}
\end{equation*} 
and the proof is concluded.
\end{proof}


\subsection{Horizontal semiconcavity estimates at abnormal points in free groups}
By the results in \cite{CannarsaRifford,FigalliRifford10}, in a small neighborhood of the final point   $\gamma(1)=(x,t) $ of a strictly normal minimizer, the distance from the origin is semiconcave. This estimate fails if $\gamma $ is    abnormal. 
However,  a horizontal version of the semiconcavity property  persists 
at abnormal points, at least in free groups.
Indeed, if  $\F_m$ is the free two-step group with $m$ generators,
  for all $(x,t)=\gamma(1)$, where $\gamma$ is abnormal length-minimizing on $[0,1]$, $\gamma(0)=(0,0)$ and
  $d(x,t)=1$   there are 
positive constants $C$ and $\delta$ so that 
\begin{equation}\label{orizzontale} 
 \sup_{\substack{y\in\R^m,
 \; \abs{y}\leq \delta }}\frac{d\big(e^{y\cdot X}(x,t)\big)+ 
 d\big(e^{-y\cdot X}(x,t) \big)-2d(x,t)}{\abs{y}^2}\leq C.
\end{equation} 
         Here $y\cdot X:=\sum_{j=1}^m y_j X_j $ and $e^{y\cdot X}(x,t)$ denotes the value at time $t=1$ of the integral curve of $y\cdot X$ leaving from $(x,t)$. 
We do not know whether or not such estimate holds uniformly in $(x,t)$ on the unit sphere. We plan to come back to such problem in a further paper.

Estimate \eqref{orizzontale} can be proved by an induction argument and the discussion below is devoted to   the proof of such statement. 

\step{Step 1.}   Let us start by observing that if $  w_1,\dots, w_d $ is an 
orthonormal  basis of a $d$-dimensional subspace $W\subset\R^m$ and   $\G_{W}:=W\times\wedge^2 W$ is a free subgroup of $\F_m:=\R^m\times\wedge^2\R^m$, then for any point $(x,t)\in \G_W$ we have the estimate 
\begin{equation}\label{uguaglio} 
 d_{\F_m}(x,t)= d_{\G_W}(x,t)=d_{\F_d}(\xi,\tau)                                                \end{equation} 
where in the last equality we  denoted $\xi_j=\scalar{x}{w_j} $
and $\tau_{jk}=\scalar{w_j\wedge w_k}{t}$ for $j=1,\dots, d$.
The $\leq$ in the first equality of \eqref{uguaglio}  follows from the fact that $\G_W$ is a subgroup of $\F_m$. The $\geq $ holds because
\begin{enumerate}[nosep,label=(\alph*)]
 \item   If $u\in L^2((0,1),\R^m) $ is a control   in $\F_m$ such that the curve $\gamma_u$
  connects the origin with $(x,t)\in\G_W\subset \F_m$, then, the orthogonal projection
   $u_W\in L^2((0,1),W)$
 is admissible in $\F_W$ and the corresponding curve $ \gamma_W$ connects the origin with $(x,t)\in\G_W$.
 
 \item $\length_{\G_W}( \gamma_W)\leq \length_{\F_m}(\gamma)$.
\end{enumerate}
Note that the  $\geq$ inequality 
  in \eqref{uguaglio}  
may fail if  we change $\F_m$ with a  nonfree two-step Carnot group $\G$.   
 This can be seen by considering the group $\R^5
\times\R=  \G$ with operation
\[
 (x_1,x_2.x_3.x_4 ,t)\cdot(\xi_1,\xi_2,\xi_3,\xi_4,t)
 =\Bigl(x+\xi,t+\tau +\frac 12(x_1\xi_2-x_2\xi_1)+
 \frac{\a}{2}(x_3\xi_4-x_4\xi_3)\Bigr)
\]
with $\a>1$ and its subgroup $\G_W:=\G_{\Span\{e_1,e_2  \}} =\{(x_1,x_2,0,0, t)\}$. Here it turns out that 
 $
 d_{ \G}( 0,0,0,0,\beta)=\sqrt{4\pi \abs{\beta} /\a} <
 \sqrt{4\pi \abs{\beta}}=
d_{\G_W}( 0,0,0,0,\beta) 
$.  

\step{Step 2.}
Let us look at estimate 
\eqref{orizzontale} for  $m=3$. In such case abnormal points in the unit sphere are of the form~$(x,t)=(w,0)$ for some $w\in\R^3$ with unit norm. Then, 
any vector $y\in\R^3$ can be written in the form $y=\xi w+\eta v$, where $\xi,\eta\in\R$ and $v\perp w$ is a suitable unit vector. 
Therefore, we have 
\[ 
\begin{aligned}
  e^{y\cdot X} (w,0)   &=(w,0)\cdot\operatorname{Exp}(y\cdot X) =(w,0)\cdot(y,0)
 \\& = (w,0)\cdot (\xi w+\eta v,0 )\in \G_{\Span\{w,v\}}=\{(\xi w+\eta v,\tau w\wedge v):(\xi,\eta,\tau)\in\R^3\}
\end{aligned}\]
(here $\operatorname{Exp}$ denotes the standard Exponential map,   see \cite[Definition~1.2.25]{BonfiglioliLanconelliUguzzoni}).
Thus all points involved in the estimate  belong to a subgroup which is isomorphic to the Heisenberg group $(\H_1,\circ)$.  
Therefore we have 
\[\begin{aligned}
d&\big((w,0)\cdot(\xi w+\eta v,0)\big)+d\big((w,0)\cdot(-\xi w-\eta v,0)\big)  -2
d (w,0) 
\\&=
d_{\H_1}((1,0,0)\circ(\xi,\eta,0))+ d_{\H_1}((1,0,0)\circ(-\xi,-\eta,0))
-2 d_{\H_1} (1,0,0) \leq C(\xi^2+\eta^2),                                         \end{aligned}
\]
by the local semiconcavity of the distance in the Heisenberg group (\cite{CannarsaRifford,FigalliRifford10}).  
Since this estimate is uniform as $v\in\R^3$ is a unit vector
orthogonal to $w$, the statement in $\F_3$ follows easily.

\step{Step 3.} Next we describe the induction step. Assume that the estimate holds for $\F_{m-1}$ and let us look at 
$(x,t)=\gamma(1) \in \F_m
$ with $d(x,t)=1$,  $\gamma(0)=(0,0)$ where $\gamma$ is an abnormal length-minimizer. 
Let $W\subset\R^m$ be the associated subspace 
introduced in \eqref{doppiovu} and assume that 
$ w_1,\dots, w_d $ is an orthonormal basis of $W$. The singularity condition means that $d\leq m-2$. Moreover, any vector $y\in\R^m$ can be written in the form 
$y=\sum_{j=1}^d\xi_j w_j+\eta v$, 
where $v\perp W$ is a suitable unit vector depending on $y$ (but we will get estimates which are uniform in $v\perp W$, $\abs{v}=1$). Therefore, we have
\begin{equation*}
         \begin{aligned}
 (x,t)\cdot\operatorname{Exp} (y\cdot X)& =(x,t)\cdot (y,0)
 \\&
 =\Bigl(\sum_{j=1}^d x_j w_j, \sum_{j<k\leq d} t_{jk}w_j\wedge w_k \Bigr)\cdot
 \Bigl(\sum_{j\leq d}\xi_j w_j+\eta v, 0\Bigr)\in \G_{W\oplus\Span\{v\}}.
\end{aligned}\end{equation*}
Thus, all involved points belong to a free subgroup which isomorphic to $\F_{d+1}$. 
  If there is an abnormal length minimizer in such subgroup that connects the origin and 
$(x,t)$, then,   
since    $d+1\leq m-1$, using \emph{Step 1} and arguing as in \emph{Step 2}, we  get the required statement~\eqref{orizzontale}. 
Otherwise, if any minimizer is normal,  we can use \cite{CannarsaRifford} or \cite{FigalliRifford10}  and we get again the desired estimate~\eqref{orizzontale}.


\section{Lack of semiconcavity for the control distance in the Engel group}
\label{cuba} 

Let us consider the vector fields
\begin{equation*}
 X_1=\p_1 
\quad\text{and}	\quad X_2 =\p_2+x_1\p_3+\frac{x_1^2}{2} \p_4.
\end{equation*}
It can be checked that $X_1$ and $X_2$ are left invariant on the Lie group in $\mathbb{E}=\R^4$ defined by the following law
\begin{equation}\label{enghel} 
 x\cdot  \xi =\Bigl(x_1+\xi_1, x_2+\xi_2, x_3+\xi_3+ x_1\xi_2,
 x_4+\xi_4 +\frac{x_1^2}{2}\xi_2+x_1\xi_3\Bigr)
\end{equation}
which is usally called \emph{Engel group}. See \cite[p.~285]{BonfiglioliLanconelliUguzzoni}). Such vector fields belong to the model studied in the seminal paper \cite{Sussmann96} on abnormal geodesics for rank two distributions and it is known that $\Abn\mathbb{E}=\{(0, x_2,0,0):x_2\in\R\}$.
Therefore, by \cite{CannarsaRifford} and  
  \cite{FigalliRifford10},  we know that the distance  from the origin $d$ is locally semiconcave on  $\R^4\setminus \R e_2$.  
Here we show that $d$ is not semiconcave at any point of the abnormal line.
Moreover, we show that $d$  is not semiconcave at such points even in the weaker horizontal sense.

In the papers   \cite{ArdentovSachkov11,ArdentovSachkov14} and  \cite{AdamsTie13} 
the explicit form of geodesics is established.
In principle, our estimates could be obtained as a consequences of the mentioned results. However the form of such geodesics is rather involved and working with their 
explicit equations  seems to be a  rather difficult task.

Observe
that the subset $\{(x_1,x_2,0,x_4)\}\subset\E$ with the induced vector fields $X_1=\p_1$ and $X_2=\p_2
+\frac{x_1^2}{2}\p_4$ can be identified with the Martinet subRiemannian system.  See the discussion in the following Remarks \ref{martino1} and \ref{martino2}.

Preliminarily we show that  taken the constant control $\wt u(t)=(0,1)$ for  $t\in[0,1]$, so that $E(\wt u)=(0,1,0,0)$,
we have  
\begin{equation}
\label{immo} 
 \operatorname{Im}  d  E(\wt u)=\Span\{e_1,e_2,e_3\}.                                                     \end{equation} 
We briefly check \eqref{immo}, by means of  standard formula for the differential of the endpoint map. Following the notation in \cite{AgrachevBarilariBoscain}, given $u\in L^2$, 
we denote by  $P_s^t(x)$ the solution of $\frac{d}{dt} P_s^t(x)=\sum_j
u_j(t)X_j( P_s^t(x))$, with $ P_s^s(x)=x$. Thus, we have the well known formula
\begin{equation*}
\begin{aligned}
 dE(  u)v & =\int \big\{ v_1(t) dP_t^1(P_0^t (0))X_1(P_0^t(0))+v_2(t) dP_t^1(P_0^t (0))X_2(P_0^t(0))
 \}dt.
\end{aligned}
\end{equation*} 
See \cite{Montgomery,Rifford14,AgrachevBarilariBoscain}.
At the point $u=\wt u$, we have  
$
 P_0^t x=e^{tX_2}x=\Bigl(x_1 , x_2+t,
 x_3+ tx_1  , x_4+\frac{x_1^2}{2} t\Bigr),
$
so that  
\[
 dP_t^1(x)=
 \begin{bsmallmatrix}
 1&0 & 0& 0
 \\
 0 & 1 & 0 & 0 &
 \\
  1-t  & 0 & 1 & 0
 \\
 (1-t)x_1 & 0 & 0& 1
           \end{bsmallmatrix}
\quad \qquad 
P_0^t(0)=\begin{bsmallmatrix}
         0 \\t\\0\\0
        \end{bsmallmatrix}
\quad\Rightarrow\quad
dP_t^1(P_0^t(0))=
 \begin{bsmallmatrix}
 1&0 & 0& 0
 \\
 0 & 1 & 0 & 0 &
 \\
1-t
 & 0 & 1 & 0
 \\
 0 & 0 & 0& 1
           \end{bsmallmatrix}.
\]
Therefore, 
\begin{equation}\label{joao} 
\begin{aligned}
 dE(u)v=\Bigl(\int_0^1 v_1(t) dt, 
 \int_0^1 v_2(t) dt,\int_0^1 (1-t )v_1(t) dt,0\Bigr)
\end{aligned}
\end{equation} 
which implies \eqref{immo}.  
The curve $x(t)=te_2 $ is both normal and abnormal. It is abnormal because   $dE(u)$ is not open. It is normal because  the equality 
\[
\la_0 \scalar{u}{v}_{L^2} 
+\big\langle (\la_1,\la_2,\la_3,\la_4),dE(u)v\big\rangle_{\R^4}=0 \quad\forall \; v\in L^2=: L^2((0,1),\R^2)
\]
holds under the choice  $\la_0=-\la_2$ and $\la_1=\la_3=0$.

It is very easy to show the failure of semiconcavity looking at the behavior of the distance in the orthogonal of $\Im dE(\wt u)$, i.e. in $\Span\{e_4\}$. 
 This is shown by estimate \eqref{assetto}, which we are now going to prove.  
A more precise version of the following proposition  can be obtained as a consequence of \cite{AgrachevBonnard97}
(see the remark after the proof).

%
%
\begin{proof}[Proof of \eqref{assetto}]
It suffices to show that there is $C_0>0$ such that
\begin{equation}
 \label{jaor}
 d(0,1,0,\la)-1\geq C_0\abs{\la} \quad\text{ for all $\lambda$ close to $0$.}
\end{equation} 

 To show  estimate \eqref{jaor}, let us consider the control problem 
$
 \dot\gamma=u_1(t)X_1(\gamma)+u_2(t)X_2(\gamma)$ with $\gamma(0)=(0,0,0,0)$ and $\gamma(1)=(0,1,0,\lambda )$,
where   $u=(u_1,u_2)\in   L^2(0,1)$.
Note that writing $\gamma=(\gamma_1,\gamma_2,\gamma_3,\gamma_4)$, we have 
\begin{equation*}
 \gamma_3(1)= \int_{\gamma_0}  x_2dx_1 =0\quad\dot\gamma_4(1)=\frac{1}{2}\int_{\gamma_0}x_1^2 dx_2=\lambda,
\end{equation*}
where we denoted  $\gamma_0:=(\gamma_1,\gamma_2)$.
%
%
%
%

Let $(\gamma^\la)_{\la\in\R}$ be a family of curves $\gamma^\la:[0,1]\to \R^2$ 
satisfying $\gamma^\la(0)=(0,0)$, $\gamma^\la(1)=(0,1)$ and 
$\int_{\gamma^\la} x_1^2 dx_2=2\lambda$.  
%
%
Then 
\begin{equation}\label{hiolk} 
\begin{aligned}
 \abs{2\la} &=  \Bigl|\int_{\gamma^\la}
 x_1^2 dx_2\Bigr| =
 \Bigl|\int_0^1 \g_1^\la(t)^2 \dot\gamma_2^\la (t) dt \Bigr|\leq \sup_{t\in[0,1]}\gamma_1^\la(t)^2 \int_0^1\abs{\dot\gamma_2^\la(t)}
 dt
\\& \leq \sup_{(x_1,x_2) \in \g^\la([0,1])}x_1^2\cdot \operatorname{length}(\gamma^\la)
 \\&\leq 2\sup_{x\in \g^\la}  x_1^2 ,
\end{aligned}
\end{equation}  
where we assumed without loss of generality that $\operatorname{length}(\gamma^\la)\leq 2$ for all  $\abs{\la}$ sufficiently small.
Therefore, there is $  t_\lambda \in(0,1)$ such that $\gamma_1^\la(  t_\la)
=\abs{\la}^{1/2}$.
Thus
\[
\begin{aligned}
 \length(\gamma_\la) & =\length(\gamma|_{[0,  t_\la]})+\length(\gamma|_{[  t_\la,1]})
\\ &
  \geq \abs{(\abs{\la}^{1/2},\gamma_2^\la(t_\la))- (0,0)} +\abs{(\abs{\la}^{1/2},\gamma_2^\la(t_\la))- (0,1)}
  \\&
   \geq \Bigl| \Bigl( \abs{\la}^{1/2},\frac 12\Bigr)-(0,0)\Bigr| +
   \Bigl| \Bigl( \abs{\la}^{1/2},\frac 12\Bigr)-(0,1)\Bigr|
  =2\sqrt{\frac 14+\abs{\la}},
\end{aligned}
\]
and the claim follows.
\end{proof}

\begin{remark}\label{martino1} 
If we let $x_3=0$ and we identify  respectively $(x_1,x_2 ,x_4)$ with $(y,x,z)\in\R^3$, an inspection of the proof above shows that  we have proved the following estimate for the Martinet vector fields $X =\p_x+\frac{y^2}{2}\p_z$ and  $Y=\p_y$, 
\[
\liminf_{z\to 0}\frac{d(1,0, z)-d(1,0,0)}{|z| }\gvertneqq        
0.\]
If $z>0$, then this estimate is contained in   \cite[eq.~(4.31)]{AgrachevBonnard97}, where it is shown that the intersection of the unit sphere with the abnormal set $y=0$,  has a parametrization of the form 
\[
 x(t)=1-t+o(t)\quad\text{and} 
\quad  z(t)=\frac{2}{3\pi^2}t+o(t)\quad\text{as $t\to 0+$}
\]
 If $z<0$, then the absolute value in the 
 first equality in the chain of estimates \eqref{hiolk} is very 
 rough and our argument does not detect the logarithmic estimate proved by \cite{AgrachevBonnard97}.
\end{remark}

\subsection{Failure of horizontal semiconcavity at abnormal points} Here we prove the horizontal 
estimate \eqref{numerodos}.
\begin{remark}\label{martino2} 
 An inspection of the proof below shows that no information on the variable $x_3$ is used (we will not make any use of the first equation of \eqref{giroabc}). Thus we get some more information on the distance for the Martinet vector fields $Y=\p_y$ and $X=\p_x+\frac{y^2}{2}\p_z$. Namely we have the estimate  
 \[
  \lim_{y\to 0}\frac{d(1,y,0)-d(1,0,0)}{ y^2}=+\infty 
 \]
\end{remark}

\begin{proof}[Proof of  \eqref{numerodos}]
Since the case $x_2=0$ is trivial,  without loss of generality it suffices to show the statement with $x_2=1$ and $ y_2 =0$. In such case we are able to prove that
 \[
  \lim_{\la\to 0}\frac{    d(e_2 \cdot \la e_1 )  -d(e_2) }
  {\la^2}=+\infty.
 \]
Note that $e_2 \cdot \la e_1=e^{\la X_1}(0,1,0,0)=(\la,1,0,0)$. 
%
Any admissible curve in $\wt \gamma=:(\gamma,\gamma_3,\gamma_4):[0,T]\mapsto \E$ is the lifting of its   plane  projection $\gamma:[0,T]\to \R^2$ with the constraints
$
 \dot\gamma_3=\gamma_1\dot\gamma_2\quad\text{and}\quad \dot\gamma_4=\frac 12\gamma_1^2\dot\gamma_2.
$
Thus, the requirements $\gamma_3(T) =\gamma_4(T)=0$ 
can be written in the form 
\begin{equation}\label{giroabc} 
\begin{aligned}
 \int_\gamma x_1 dx_2   
 = 0                                    
\quad\text{and }
\quad 
\int_\gamma x_1^2 dx_2=0
\end{aligned}
 \end{equation}  
(the first equality will not be used in our argument).

Let us assume by contradiction that there exists a family of curves 
$x^\la :[0,T_\la]\to\R^2$ with $\la\in\R$ and a constant $C_0>0$ 
such that for all $\lambda$ close to $0$
all the following properties hold:
\begin{equation}
\left\{\begin{aligned}
 &x^\la(0)=(0,0),\qquad
 x^\la(T_\la)=(\la,1)
 \\&
 \abs{\dot x^\la}\leq 1\quad\text{a.e.}
 \\& T_\la-1\leq C_0 \la^2
 \\&
 \int_{0}^{T_\la}  x_1^\la(t)^2\dot x_2^\la (t) dt=0
                \end{aligned}
                \right. 
 \end{equation}
We will show that this produces the following contradiction. Letting
\begin{equation*}
\text{(LHS)}:= \int_{\{\dot x_2^\la>0\}}(x_1^\la)^2\dot x_2^\la
=\Bigl|\int_{\{\dot x_2^\la\leq 0\}} (x_1^\la)^2 \dot x_2^\la\Bigr|=:\text{(RHS)}.
\end{equation*}
we claim that there are $C_1, C_2>0$ such that if $\abs{\lambda}$ is sufficiently small, then 
\begin{equation}
 \text{(RHS)}\leq C_1\la^4\quad \text{and}\qquad  \text{(LHS)}\geq  C_2\abs{\la}^3. 
\end{equation} 
 
 To get this contradiction, by symmetry it suffices to discuss the case $\la>0$. 
 The proof is articulated in several steps. 
 
\step{Step 1.} First we prove the estimate $|\{t\in[0, T_\la]: \dot x_2^\la(t)\leq 0\}|\leq C_0\la^2$.
This can be achieved easily because
\[
 1=x_2^\la(T)-x_2^\la(0)=\int_0^{T_\la}\dot x_2^\la =\int_{\dot x_2^\la >0}\dot x_2^\la
 +\int_{\dot x_2^\la<0}\dot x_2^\la
 \leq \int_{\dot x_2^\la>0}\dot x_2^\la 
 \leq |\{\dot x_2^\la>0 \}|,
\]
because  $\abs{\dot x_2^\la}\leq \abs{\dot x^\la}\leq 1$. Thus 
$|\{\dot x_2^\la> 0\}|\geq 1$.
Therefore, its complementary set satisfies  
\[
 |\{\dot x_2^\la\leq 0\}|=T_\la-|\{\dot x_2^\la> 0\}|\leq 1+C_0\la^2-1=C_0\la^2.
\]

\step{Step 2.} There is $C_1>0$ such that 
$\sup_{[0,T_\la]} (x_1^\la)^2\leq C_1\la^2.
$

Let $\la>0$  and define the positive number $q_\la=\max_{t\in[0, T_\la]} x_1^\la(t)/\la $.  (An analogous discussion, left to the reader, can be given working with $p_\la:= \min _{t\in[0, T_\la]} x_1^\la(t)/\la$). Assume that $q_\la\geq 2$, otherwise there is nothing to prove.  
Take a point $( \lambda q_\la ,  x_2^\la) \in\gamma^\la([0,T_\la])$. Then
\[
\begin{aligned}
 1+C_0\la^2&\geq \length(x^\la)
 \geq d((\la q_\la,x_2^\la),(\la,1))+d((\la q_\la,x_2^\la),(0,0))
 \\&\geq d((\la q_\la,x_2^\la),(\la,1))+d((\la q_\la,x_2^\la),(\lambda,0))
 \\&\geq
 \text{(this quantity is minimal for  $x_2^\la=\frac 12$)}
\\&\geq  d((\la q_\la,1/2),(\la,1))+d((\la q_\la,1/2),(\lambda,0))
\\&=2\sqrt{\frac 14+(q_\la -1)^2\la^2}.
 \end{aligned}
\]
Comparing the first and the last term, we see that $q_\la$ should be bounded  uniformly for small 
positive $ \la $.

\step{Step 3.} Estimate of  (RHS):
\[
\Bigl| \int_{\dot x_2^\la<0}(x_1^\la)^2\dot x_2^\la\Bigr|\leq \sup_{[0,T_\la]} (x_1^\la)^2 
\abs{\{\dot x_2^\la<0\}}\leq C_1\la^2 \cdot C_0\la^2,
\]
as desired.

The estimate of (LHS) is more delicate and we need some  preliminary notation.
Introduce the following rotation $ \rho_\la: \R^2\to\R^2$
\begin{equation}\label{matrice} 
 \r_\la
 \begin{pmatrix}
   \xi_1\\ \xi_2
  \end{pmatrix}=
  \frac{1}{\sqrt{1+\la^2}}
  \begin{pmatrix}
   1& \la
   \\
   -\la & 1
  \end{pmatrix}
\begin{pmatrix}
   \xi_1\\\xi_2
  \end{pmatrix}
=\begin{pmatrix}
(\xi_1+\la\xi_2)/\sqrt{1+\la^2}
\\
(\xi_2 -\la \xi_1)/\sqrt{1+\la^2}
\end{pmatrix}.
\end{equation} 
Observe that  $\rho_\la
(0,\sqrt{1+\la^2})=(\la,1)$ for all $\la$.
Define then 
\begin{equation}\label{pizzero} 
 p_0=2\sqrt{2C_0\;}  
\end{equation} 
and construct the following sets (we will work both with these sets and with their rotated through $\r_\la$). 
\begin{equation*}
 \wt \ell_\la:=\{(\xi_1,\xi_2):\xi_2=\sqrt{1+\la^2}(1-\la)\}.
\end{equation*} 
This is a horizontal line below the point $(0,\sqrt{1+\la^2})$ of an amount   of order $\la$. Inside this line we fix the (rather short) segment 
\begin{equation*}\begin{aligned}
 \wt F_\la  & =\{(\xi_1,\xi_2):\xi_2=\sqrt{1+\la^2}(1-\la),\abs{\xi_1}\leq p_0\la^{3/2}\}
 \\&
 =\Bigl\{\bigl (\theta p_0\la^{3/2},\sqrt{1+\la^2}(1-\la)\bigr ): \abs{\theta}\leq 1\Bigr\}
 \end{aligned}
\end{equation*} 
and the tiny rectangle
\[
\wt R_\la 
:=  \Bigl\{\bigl (\theta_1 p_0\la^{3/2},\sqrt{1+\la^2}(1-\theta_2 \la)\bigr ): \abs{\theta_1}\leq 1,\quad\abs{\theta_2}\leq 1\Bigr\}. 
\]
which extends on top of  $\wt F_\la$ of an amount  approximately $2\lambda$.
Then, on the left and on the right of $\wt R_\la$ introduce the set 
\begin{equation*}
\wt M_\la :=  \Bigl\{\bigl (\theta_1 p_0\la^{3/2},\sqrt{1+\la^2}(1-\theta_2 \la)\bigr ): \abs{\theta_1}\geq 1,\quad\abs{\theta_2}\leq 1\Bigr\}. 
\end{equation*} 
Finally, on top of $\wt R_\la\bigcup \wt M_\la$ we have the half-plane
\[
\wt G_\la :=  \Bigl\{\bigl (\theta_1 p_0\la^{3/2},\sqrt{1+\la^2}(1-\theta_2 \la)\bigr ):  \theta_1 \in\R ,\quad \theta_2 \leq -1\Bigr\}. 
\]
Correspondingly we have the rotated sets 
$
 \ell_\la:= \rho_\la\wt \ell_\la$, $F_\la:=\rho_\la\wt F_\la$, $R_\la:=\rho_\la\wt R_\la
 ,$ and  $ M_\la=\rho_\la\wt M_\la
$. 
The tiny rectangle  $R_\la$ is centered at the final point  $(\la,1)$.

\step{Step 4.}  Under the choice of $p_0$ made in \eqref{pizzero}, we have for sufficiently small   positive $\la$
\begin{equation*}
 x^\la([0,T_\la])\cap M_\la =\varnothing\qquad \text{and} 
\quad x^\la([0,T_\la])\cap G_\la=\varnothing .
 \end{equation*} 

We start with the proof of the first claim, which gives the more striking information, due to the power $\la^{3/2}$  in the horizontal size of $R_\la$. 
We work with the rotated curve  $\xi^\la(t)= \rho_\la^{-1} x^\la(t)$. Such curve has length at most 
$ 1+C_0\la^2$ and connects $(0,0)$ with  $(0,\sqrt{1+\la^2}).$
Assume by contradiction that there is a point  
belonging to  $\wt M_\la\cap\xi^\la([0,T_\la])$.
Such point has the form  $\bigl(\theta_1 p_0\la^{3/2},
\sqrt{1+\la^2} (1-\theta_2\la)\bigr)$, for some $\theta_1,\theta_2$ satisfying
$
\abs{\theta_1}\geq 1$, and  $\abs{\theta_2}\leq 1$. Therefore, the estimate on the length furnishes
\begin{equation*}
\begin{aligned}
 1+C_0\la^2  &\geq \length(x^\la)
 \\& \geq d\Bigl ((0,0), (\theta_1 p_0\la^{3/2}, \sqrt{1+\la^2} (1-\theta_2\la))\Bigr )
 \\&
 \qquad +
 d\Bigl ((\theta_1 p_0\la^{3/2}, 
 \sqrt{1+\la^2} (1-\theta_2\la)
 ),(0,\sqrt{1+\la^2})\Bigr)
 \\&\geq \text{(we minimize choosing $\theta_2=1$)}
 \\&
 \geq\sqrt{\theta_1^2p_0^2\la^3+(1+\la^2)(1-\la)^2}
 +\sqrt{\theta_1^2p_0^2\la^3+(1+\la^2)\la^2}
 \\&
 \geq 1-\la 
 +\sqrt{\theta_1^2p_0^2\la^3+(1+\la^2)\la^2}
 \geq 1-\la +\la\sqrt{1+p_0^2\la}.
 \end{aligned}
\end{equation*}
Comparing the first and the last term, we see that this chain of inequality conflicts with the choice of $p_0$ made in \eqref{pizzero}, for small $\lambda$.

 Next we show the second statement of \emph{Step 4.}
Let $\la>0$ be a small number and assume by contradiction that there exists $\bar x^\la
\in G_\la\cap x^\la([0,T_\lambda])$. The rotated point  
$ \ol \xi ^\la:=\rho_\la^{-1} \ol x^\la$ has the form 
$(\theta_1p_0\la^{3/2}, \sqrt{1+\la^2} (1-\theta_2\la))$ with $\theta_1\in\R$ and $ 
\theta_2\leq-1$. Thus, 
 it must be $\bar \xi_2^\la\geq (1+\la)\sqrt{1+\la^2}$. Therefore 
\[
\begin{aligned}
 1+C_0\la^2 &\geq d((0,0),(\bar \xi_1^\la,\bar \xi_2^\la))+d((\bar \xi_1^\la,\bar \xi_2^\la)
  ,(0,\sqrt{1+\la^2}))
 \\&
 \geq
 \abs{\bar \xi_2^\la}+\abs{\bar \xi_2^\la -
 \sqrt{1+\la^2}}
 \\&
 \geq (1+\la)\sqrt{1+\la^2}+\bigl((1+\la)\sqrt{1+\la^2}-\sqrt{1+\la^2}\bigr)
 \\&=(1+2\la)\sqrt{1+\la^2}.
\end{aligned}
\]
Again, comparing the first and last term, we find a contradiction and \emph{Step 4.} 
is accomplished.

\step{Step 5.} We claim that if  $(  x_1 ,  x_2 )\in F_\la$, then 
$
  x_2^\la\leq 1-\frac{\la}{2}.$
Here we use the fact that the segment $F_\la$ is very short with respect to $\la$.

To check the claim, recall that 
$  x\in F_\la$  means that  there is $ \theta\in[-1,1] $ 
such that 
$
 (    x_1 
  ,  x_2)=\rho_\la(
\theta  p_0\la^{3/2},
\sqrt{1+\la^2}(1-\la))$
Thus, using \eqref{matrice}, we find 
\[
 \bar x_2=\frac{1}{\sqrt{1+\la^2}}\Bigl(-\theta p_0\la^{5/2}+\sqrt{1+\la^2}(1-\la)\Bigr)
\]
and Step 5 is accomplished, if $\la>0$ is sufficiently small.

\step{Step 6.} If $  x=(x_1,x_2)\in R_\la$, then we have 
$ x_1\geq \frac{\la}{2}.$

This can be seen again by means of  \eqref{matrice}, which gives for suitable 
$ \theta_1, \theta_2\in[-1,1] $
\[
   x_1=\frac{1}{\sqrt{1+\la^2}}\Bigl(\theta_1 p_0\la^{3/2}+
 \la\sqrt{1+\la^2}(1-\theta_2\la)\Bigr)
 \geq  \frac{\la}{2},
\]
for all positive $\lambda $ sufficiently small.

\step{Step 7.} Lower estimate of (LHS).

Take $\lambda$ and the corresponding curve $x^\la$. Let $  t_\la\in[0,T_\la]$ be the unique time such that 
\[
 x^\la ( t_\la) \in F_\la\qquad x^\la(t ) \notin F_\la\quad\forall t\in\mathopen]  t_\la,T_\la].
\]
Note that $x^\la (t)
\in R_\la$ 
for all   $t\in[ t_\la,T_\la]$.
This follws fron the fact that the curve  $x^\la$ can intersect the line $\ell_\la$ only in the segment $F_\la$. Thus, after the time $t_\la$ it should lie   on top of  such line. On the other side, by \emph{Step 4.}, the curve cannot touch the ``prohibited set'' 
$M_\la\cup G_\la$. Therefore  $x^\la([t_\la,T_\la])\subset R_\la$. Therefore
\[
\begin{aligned}
 \int_{\dot x_2^\la>0}(x_1^\la)^2 \dot x_2^\la
& \geq \int_{[  t_\la,T_\la]\cap\{ \dot x_2^\la>0\}} (x_1^\la)^2 \dot x_2^\la 
 \geq\inf_{(x_1,x_2)\in R_\la} x_1 ^2  \int_{[  t_\la,T_\la]
 \cap\{ \dot x_2^\la>0\}} \dot x_2^\la 
 \geq \inf_{(x_1,x_2)\in R_\la} x_1 ^2  \int_{[ t_\la,T_\la]} \dot x_2^\la 
 \\&\geq\text{(By  Step 6)} 
 \geq
 \frac{\la^2}{4} (x_2^\la(T_\la)-x_2^\la( t_\la))
\\&\geq
 \text{(By Step  5) }\geq  \frac{\la^2}{4} \Bigl(1-\Bigl(1-\frac{\la}{2}\Bigr)\Bigr)
 =\frac{\la^3}{8},
\end{aligned}
\]
and the proof is concluded. 
\end{proof}

\section*{Acknowledgements}  
The authors are members of the {\it Gruppo Nazionale per
l'Analisi Matematica, la Probabilit\`a e le loro Applicazioni} (GNAMPA)
of the {\it Istituto Nazionale di Alta Matematica} (INdAM)

\footnotesize

 \phantomsection
\addcontentsline{toc}{section}{References}

   \newcommand{\etalchar}[1]{$^{#1}$}
\def\cprime{$'$} \def\cprime{$'$}
\providecommand{\bysame}{\leavevmode\hbox to3em{\hrulefill}\thinspace}
\providecommand{\MR}{\relax\ifhmode\unskip\space\fi MR }
\providecommand{\MRhref}[2]{%
  \href{http://www.ams.org/mathscinet-getitem?mr=#1}{#2}
}
\providecommand{\href}[2]{#2}

\end{document}